\documentclass[12pt]{article}
\usepackage{amsmath, amsthm, amscd, amsfonts, amssymb, graphicx}
\usepackage{titles}
\usepackage{algorithm}
\usepackage{algpseudocode}
\usepackage{subfigure} 
\usepackage{caption}
\usepackage{graphics,graphicx}
\usepackage{amsmath, amssymb, amsthm}
\usepackage{mathrsfs} 
\usepackage{enumitem}

\usepackage{indentfirst}

\numberwithin{equation}{section}
\usepackage{epsfig}
\usepackage{color}
\usepackage{fancyhdr}
\usepackage{titlesec}
\usepackage{tikz}
\usepackage{afterpage}
\usetikzlibrary{arrows}
\usetikzlibrary{decorations.markings}
\tikzstyle{block}=[draw opacity=0.7,line width=1.4cm]
\tikzset{
	big black arrow/.style={
		decoration={markings,mark=at position 1 with {\arrow[scale=2.5,black]{>}}},
		postaction={decorate},
		shorten >=0.4pt},
	line/.style={draw, ->}}

\usepackage{eufrak}
\setbox0=\hbox{$+$}
\newdimen\plusheight
\plusheight=\ht0
\def\+{\;\lower\plusheight\hbox{$+$}\;}

\setbox0=\hbox{$-$}
\newdimen\minusheight
\minusheight=\ht0
\def\-{\;\lower\minusheight\hbox{$-$}\;}

\setbox0=\hbox{$\cdots$}
\newdimen\cdotsheight
\cdotsheight=\plusheight
\def\cds{\lower\cdotsheight\hbox{$\cdots$}}
\newtheorem{theorem}{Theorem}[section]
\newtheorem{lemma}[theorem]{Lemma}
\newtheorem{corollary}[theorem]{Corollary}

\theoremstyle{definition}

\theoremstyle{remark}
\newtheorem{remark}[theorem]{Remark}
\numberwithin{equation}{section}

\allowdisplaybreaks
\title{\textbf{Proofs of the Conjectures on $SOME(n)$ and $DSOME(n)$ Functions Related to Integer Partitions}}
\date{}

\begin{document}
\maketitle
\vspace{-2cm}
\begin{center}
	{\bf Gaurab Bardhan$^1$ and Nipen Saikia$^{2, \ast}$}\\
	$^1$Department of Mathematics, Tyagbir Hem Baruah College,\\ Jamugurihat, Sonitpur, Assam, India.\\
	E. Mail: gaurabbardhan561@gmail.com
	\vskip2mm
	$^2$Department of Mathematics, Rajiv Gandhi
	University,\\ Rono Hills, Doimukh, Arunachal Pradesh, India.\\
	E. Mail(s): nipennak@yahoo.com\\
	$^\ast$\textit{Corresponding author}.\end{center}
\begin{abstract}
Andrews and  Dastidar  (2026) introduced   $SOME(n)$ and 
$DSOME(n)$ functions related to  partitions  of a positive integer $n$, where $SOME(n)$ is the sum of all the odd parts in the partitions of $n$ minus the sum of all the even parts and $DSOME(n)$ is the
sum of all the odd parts in the partitions of $n$ into distinct parts minus
the sum of all the even parts in the same partitions. The purpose of this paper is to establish the conjecture
$SOME(\lambda)\equiv0\pmod{5^\alpha}$, $\alpha\ge 1$ and $\lambda\ge0$ are integers such that
$24\lambda\equiv1\pmod{5^\alpha}$ due to Andrews and  Dastidar, and the conjecture  $DSOME(50n+21)\equiv0\pmod{8}$ due to Baruah and Gogoi (2026). In the process, we establish  some new infinite families of congruences modulo 2, 4, and 8 for  $DSOME(n)$.
\end{abstract}

\noindent  {\bf Keywords and phrases:} Integer partitions; $SOME(n)$  and $DSOME(n)$ functions; Identities; Congruences·
\vskip 2mm
\noindent  {\bf Mathematical Subject Classification:} Primary-11P81, 11P83; Secondary-05A15, 05A17.

\section{Introduction}
For a positive integer $n$ and complex numbers $a$ and $q$ with $|q|<1$, define the standard 
$q$-Pochhammer symbols by 
$$(a;q)_0=1,\qquad
(a;q)_n=\prod_{j=0}^{n-1}(1-aq^j),\qquad \mbox{and}\qquad
(a;q)_\infty=\prod_{j=0}^{\infty}(1-aq^j).$$
For brevity,   we write
$$f_t:=(q^t;q^t)_\infty$$ for every positive integer $t$.

A partition of a positive integer $n$ is a finite non-increasing sequence of positive integers
$\beta_1\geq\beta_2\geq\cdots\geq\beta_k>0$ satisfying
$n=\sum_{i=1}^{k}\beta_i$. The integers $\beta_i$ are called the parts of the partition.
If $p(n)$ denotes the number of partitions of $n$, then the  generating function of $p(n)$ due to Euler  \cite{euler1748introductio} is given by
\begin{equation}\label{SDI3}
\sum_{n=0}^{\infty}p(n)q^n
=\dfrac{1}{(q;q)_\infty}
=\dfrac{1}{f_1},
\qquad p(0)=1.
\end{equation}
Also, if  $p_{d}(n)$ denotes the number of partitions of $n$ wherein parts are distinct then its generating function is given by 
\begin{equation}\label{DSM8M25}
\sum_{n=0}^{\infty}p_{d}(n)q^n=\dfrac{(q^2;q^2)_\infty}{(q;q)_\infty}=\dfrac{f_2}{f_1}.   
\end{equation}
Andrews and Dastidar~\cite{MG} introduced $SOME(n)$  and $DSOME(n)$ functions related to the  partitions  of a positive integer $n$, where $SOME(n)$ is the sum of all the odd parts in the partitions of $n$ minus the sum of all the even parts and $DSOME(n)$ is the
sum of all the odd parts in the partitions of $n$ into distinct parts minus
the sum of all the even parts. They obtained the  generating functions of $SOME(n)$  and $DSOME(n)$  as
\begin{equation}\label{SDI4}
\sum_{n=0}^{\infty}SOME(n)q^n
=
\dfrac{1}{(q;q)_\infty}
\sum_{m=1}^{\infty}\dfrac{q^m}{(1+q^m)^2}
\end{equation}
and
\begin{equation}\label{bg1}\sum_{n=0}^{\infty}DSOME(n)q^n
=
(-q;q)_\infty
\sum_{m=1}^{\infty}
\dfrac{(-1)^{m-1}q^m}{(1+q^m)^2}.\end{equation}
and established following Ramanujan-type congruences for $SOME(n)$  and $DSOME(n)$ functions
\begin{align*}
SOME(4n)&\equiv0\pmod{4},\\
DSOME(4n)&\equiv0\pmod{4},\\
SOME(5n+2)&\equiv0\pmod{5},\\
\intertext{and}
SOME(5n+4)&\equiv0\pmod{5}.\\
\end{align*}
They also gave the following conjecture \cite[p. 8, (3)]{MG}: For integers  $\alpha\ge 1$  and $\lambda \ge0$  whenever $24\lambda\equiv1\pmod{5^\alpha}$, we have
\begin{equation}\label{sc}
SOME(\lambda)\equiv0\pmod{5^\alpha}.
\end{equation}
Baruah and Gogoi~\cite{NDB}  also proved some congruences for $DSOME(n)$ modulo $4$ and $8$ by expressing \eqref{bg1} in the form
\begin{equation}\label{SDI10}
	\sum_{n=0}^{\infty}DSOME(n)q^n
	=
	\dfrac{1}{8}
	\left(
	\dfrac{f_2}{f_1}
	-
	\dfrac{f_1^7}{f_2^3}
	\right).
\end{equation} They also proposed the following conjecture modulo 8 \cite[p. 14, Conjecture 4.1]{NDB}:
\begin{equation}\label{SDI11}
DSOME(50n+21)\equiv0\pmod{8}.
\end{equation}

The main objective of this paper is to give  proofs of conjectures \eqref{sc} and \eqref{SDI11}.
The layout of the paper is as follows:  In Sect. 2 we record some preliminary results for ready reference. In Sect 3, we first prove the conjecture \eqref{sc} related to $SOME(n)$ function. In Sect. 4,  we prove some new congruences modulo 2, 4, and 8 for $DSOME(n)$. and establish the conjecture  \eqref{SDI11}. 

\section{Preliminary results}
Ramanujan's general theta function $f(\alpha, \beta)$ \cite[p. 34, (18.1)]{BBC} is defined by
$$
   f(\alpha, \beta) = \sum_{m=-\infty}^{\infty} \alpha^{m(m+1)/2} \beta^{m(m-1)/2}, \qquad |\alpha\beta| < 1.
$$
Three important special cases for $f(\alpha, \beta)$ are the functions $\phi(q), \psi(q)$, and $f(-q)$ \cite[p. 35, Entry 18]{BBC}, which are defined as
\begin{equation}\label{phi}
    \phi(q) := f(q, q) = \sum_{m=-\infty}^{\infty} q^{m^2} = \dfrac{f_2^5}{f_1^2f_4^2},
\end{equation}
    $$\psi(q) := f(q, q^3) = \sum_{m=0}^{\infty} q^{m(m+1)/2} = \dfrac{f_2^2}{f_1},$$
and 
\begin{equation}\label{DSE19}
    f(-q) := f(-q, -q^2) = \sum_{m=-\infty}^{\infty} (-1)^m q^{m(3m-1)/2} = f_1.
\end{equation}
Also, from \eqref{phi}, we have 
\begin{equation}\label{DSE3}
\phi(-q)=\dfrac{f_1^2}{f_2}
=1+2X(q),
\end{equation}
where 
\begin{equation}\label{xq}
X(q)=\sum_{m=1}^{\infty}(-1)^m q^{m^2}.
\end{equation}

In the following lemmas, let  $\sigma(n)$ denote the sum of divisors of a positive integer  $n$, where $\sigma(x)=0$ if $x$ is not an integer.
\begin{lemma}We have
    \begin{equation}\label{DSMN3}
        q\dfrac{\dfrac{d}{dq} f_1}{f_1}=
-\sum_{n=1}^{\infty}\sigma(n)q^n.
    \end{equation}
\end{lemma}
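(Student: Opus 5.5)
This is the classical logarithmic derivative of Euler's product. The plan is to differentiate $\log f_1$ termwise and then expand into a power series.

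First, on the open disc $|q|<1$, the product $f_1=\prod_{k\ge1}(1-q^k)$ converges absolutely and locally uniformly and has no zeros there. Hence we may write
$$\log f_1=\sum_{k=1}^{\infty}\log(1-q^k),$$
taking the principal branch; each term is analytic since $|q^k|<1$. Differentiating term by term, which is justified by local uniform convergence of the derivative series $\sum_k kq^{k-1}/(1-q^k)$ on compact subsets, and multiplying by $q$ gives
$$q\dfrac{\dfrac{d}{dq}f_1}{f_1}=-\sum_{k=1}^{\infty}\dfrac{kq^k}{1-q^k}.$$

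Next, expand each summand as a geometric series, $\dfrac{kq^k}{1-q^k}=\sum_{j\ge1}kq^{kj}$, to obtain the double series $-\sum_{k\ge1}\sum_{j\ge1}kq^{kj}$. This series converges absolutely for $|q|<1$, since $\sum_k k|q|^k/(1-|q|^k)<\infty$, so we may regroup by $n=kj$. For fixed $n$, $k$ runs over the divisors of $n$, so the coefficient of $q^n$ is $-\sum_{k\mid n}k=-\sigma(n)$. This yields \eqref{DSMN3}.

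The only point needing care is the analytic justification for taking logarithms and interchanging sums. One can sidestep it entirely by working with formal power series: the logarithmic derivative of a product of formal series with constant term $1$ is the sum of the logarithmic derivatives, and the same rearrangement then holds coefficientwise. I do not expect any genuine obstacle here.
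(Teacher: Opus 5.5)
Your proof is correct and follows the paper's argument: logarithmic differentiation of the product gives $-\sum_{k\ge1} kq^k/(1-q^k)$, each term is expanded as a geometric series, and the terms are regrouped by $n=kj$ to get coefficient $-\sigma(n)$. The paper does not justify taking logarithms or interchanging the sums; your justification, whether analytic or via formal power series, fills that in.
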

\begin{proof}
By logarithmic differentiation, we obtain
\begin{align}
 q\dfrac{\dfrac{d}{dq} f_1}{f_1}
&=
q\dfrac{d}{dq}
\left(
\sum_{m=1}^{\infty}\log(1-q^m)
\right)
\notag\\
&\label{SMNE2}=
-\sum_{m=1}^{\infty}
\dfrac{mq^m}{1-q^m}
\\
&=
-\sum_{m=1}^{\infty}
\sum_{j=1}^{\infty}m q^{mj}
\notag\\
&=
-\sum_{n=1}^{\infty}\sigma(n)q^n.\notag
\end{align}
\end{proof}
\begin{lemma} We have
\begin{equation}\label{DSE13}
    \sum_{m=1}^{\infty}
\dfrac{q^m}{(1+q^m)^2}\equiv\sum_{n=1}^{\infty}\sigma(n)q^n\pmod4.
\end{equation}
\end{lemma}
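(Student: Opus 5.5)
We expand each summand as a power series and compare coefficients, reducing everything to a congruence between two divisor sums. For $|q|<1$,
\[
\frac{x}{(1+x)^2}=\sum_{j=1}^{\infty}(-1)^{j-1} j\,x^{j},
\]
so substituting $x=q^m$ and summing over $m$ gives
\[
\sum_{m=1}^{\infty}\frac{q^m}{(1+q^m)^2}=\sum_{n=1}^{\infty}\Bigl(\sum_{d\mid n}(-1)^{d-1}d\Bigr)q^n .
\]
Here $d=j$ is the exponent index and $m=n/d$ runs over the complementary divisors. It therefore suffices to prove, for every $n\ge1$,
\[
\sum_{d\mid n}(-1)^{d-1}d\equiv\sigma(n)\pmod 4 .
\]

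The two sums differ only in the terms with $d$ even. Hence
\[
\sigma(n)-\sum_{d\mid n}(-1)^{d-1}d=2\sum_{\substack{d\mid n\\ d\ \text{even}}} d .
\]
It remains to show that $2\sum_{d\mid n,\,d\text{ even}} d\equiv 0\pmod 4$, that is, that $\sum_{d\mid n,\,d \text{ even}} d$ is even. This is immediate, since every summand is even; when $n$ is odd the sum is empty and equals $0$. So the two coefficients agree modulo $4$ for every $n$, which proves \eqref{DSE13}.

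No step here is a genuine obstacle. The only points needing care are the correct expansion of $x/(1+x)^2$ and the reindexing of the double sum by $n=md$. That reindexing is justified by absolute convergence for $|q|<1$, or can be treated purely at the level of formal power series.
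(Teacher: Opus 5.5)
Your proof is correct and follows essentially the same route as the paper: expand $x/(1+x)^2$, regroup by $n=md$, and observe that the difference from $\sigma(n)$ is twice the sum of the even divisors of $n$. The paper rewrites that difference as $4\sigma(n/2)$, while you simply note that the even-divisor sum is even; this is the same observation.
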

\begin{proof}We have
\begin{align}
\sum_{m=1}^{\infty}
\dfrac{q^m}{(1+q^m)^2}
&=
\sum_{m=1}^{\infty}
\sum_{j=1}^{\infty}
(-1)^{j-1}j q^{mj}
\notag\\
&=
\sum_{n=1}^{\infty}
\left(
\sum_{j\mid n}(-1)^{j-1}j
\right)q^n
\notag\\
&=
\sum_{n=1}^{\infty}
\left(
\sum_{\substack{j\mid n\\j\ {\rm odd}}}j
-
\sum_{\substack{j\mid n\\j\ {\rm even}}}j
\right)q^n
\notag\\
&=
\sum_{n=1}^{\infty}
\left(
\sigma(n)
-
2\sum_{\substack{j\mid n\\j\ {\rm even}}}j
\right)q^n
\notag\\
&\label{SMN1}=
\sum_{n=1}^{\infty}
\left(
\sigma(n)-4\sigma(n/2)
\right)q^n\\
&\equiv
\sum_{n=1}^{\infty}\sigma(n)q^n
\pmod4.\notag
\end{align}
\end{proof}
\begin{lemma}
    For $x\in\mathbb{C}$, with $|x|<1$, we have
    \begin{equation}\label{DSM8M17}
\dfrac{x(1+x^2)}{(1+x)^4}
=
\sum_{j=1}^{\infty}
(-1)^{j-1}
\dfrac{j(j^2+2)}{3}x^j.
    \end{equation}
\end{lemma}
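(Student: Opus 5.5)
The plan is to expand both sides as power series in $x$ and compare coefficients, using the elementary expansion
\[
\frac{1}{(1+x)^4}=\sum_{k=0}^{\infty}(-1)^k\binom{k+3}{3}x^k ,\qquad |x|<1,
\]
which follows from differentiating the geometric series $\sum_{k\ge0}(-1)^kx^k=1/(1+x)$ three times, or directly from the binomial theorem.

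Multiplying by $x+x^3$, the coefficient of $x^j$ on the left-hand side of \eqref{DSM8M17} for $j\ge1$ is
\[
(-1)^{j-1}\binom{j+2}{3}+(-1)^{j-3}\binom{j}{3}
=(-1)^{j-1}\left(\binom{j+2}{3}+\binom{j}{3}\right).
\]
Here we use the conventions $\binom{j}{3}=0$ for $j<3$ and $\binom{j+2}{3}=0$ for $j<1$. These conventions are consistent with the polynomial formulas $j(j-1)(j-2)/6$ and $(j+2)(j+1)j/6$, since those vanish at $j=0,1,2$ and $j=0$ respectively. So it suffices to verify the polynomial identity
\[
\frac{(j+2)(j+1)j}{6}+\frac{j(j-1)(j-2)}{6}=\frac{j\big[(j^2+3j+2)+(j^2-3j+2)\big]}{6}=\frac{j(2j^2+4)}{6}=\frac{j(j^2+2)}{3},
\]
which gives exactly the claimed coefficient.

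As an alternative route, one could start from $\sum_{j\ge1}(-1)^{j-1}jx^j=x/(1+x)^2$, which is already used in the proof of \eqref{DSE13}. Applying the operator $x\frac{d}{dx}$ twice then gives $\sum_{j\ge1}(-1)^{j-1}j^3x^j$, and one combines this with twice the $j$-series. This is more computational, so I would use the binomial-coefficient argument.

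The only point needing care is the index bookkeeping: the shift by $x^3$ produces a sign $(-1)^{j-3}=(-1)^{j-1}$, and the small cases $j=1,2$ must agree with the polynomial formula, which they do by the vanishing noted above. Convergence for $|x|<1$ is automatic from the absolute convergence of the binomial series.
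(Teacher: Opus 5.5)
Your proposal is correct and matches the paper's proof essentially step for step: expand $(1+x)^{-4}$ by the binomial series, multiply by $x+x^3$, and use $\binom{j+2}{3}+\binom{j}{3}=\frac{j(j^2+2)}{3}$. As in the paper, the vanishing of $\binom{j}{3}$ for $j=1,2$ takes care of the small indices.
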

\begin{proof}
By the generalized binomial theorem, we note that
\begin{equation}\label{DSM8M14}
\dfrac{1}{(1+x)^4}
=
\sum_{j=0}^{\infty}
(-1)^j
\binom{j+3}{3}x^j.
\end{equation}
Employing \eqref{DSM8M14} and using
$\dbinom{j}{3}=0$ for $j=1,2$, we obtain
\begin{align}
\dfrac{x(1+x^2)}{(1+x)^4}
&=
x\sum_{j=0}^{\infty}
(-1)^j\binom{j+3}{3}x^j
+
x^3\sum_{j=0}^{\infty}
(-1)^j\binom{j+3}{3}x^j
\notag\\
&=
\sum_{j=1}^{\infty}
(-1)^{j-1}\binom{j+2}{3}x^j
+
\sum_{j=3}^{\infty}
(-1)^{j-3}\binom{j}{3}x^j
\notag\\
&\label{DSM8M15}=
\sum_{j=1}^{\infty}
(-1)^{j-1}
\left(
\binom{j+2}{3}+\binom{j}{3}
\right)x^j.
\end{align}
Also, 
\begin{equation}\label{DSM8M16}
\binom{j+2}{3}+\binom{j}{3}
=
\dfrac{j(j+1)(j+2)}{6}
+
\dfrac{j(j-1)(j-2)}{6}
=
\dfrac{j(j^2+2)}{3}.
\end{equation}
Employing \eqref{DSM8M16} in \eqref{DSM8M15}, we complete the proof.
\end{proof}
\begin{lemma}\label{lem24}
   For all positive integers $s$ and $n$, we have 
   \begin{equation}\label{DSM8M20}
       \sum_{j\mid n}(-1)^{j-1}j^s=
\sigma_s(n)-2^{s+1}\sigma_s(n/2),
   \end{equation}
   where $\sigma_s(n)=\sum_{j\mid n}j^s$ and $\sigma_s(n/2)=0$ whenever $n$ is odd.
\end{lemma}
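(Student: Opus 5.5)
The plan is to split the divisor sum according to the parity of $j$, exactly as in the proof of \eqref{SMN1}, which is the case $s=1$ of the present statement. For odd $j$ we have $(-1)^{j-1}=1$, and for even $j$ we have $(-1)^{j-1}=-1$. Hence
\begin{align*}
\sum_{j\mid n}(-1)^{j-1}j^s
&=\sum_{\substack{j\mid n\\ j\ {\rm odd}}}j^s-\sum_{\substack{j\mid n\\ j\ {\rm even}}}j^s
=\sigma_s(n)-2\sum_{\substack{j\mid n\\ j\ {\rm even}}}j^s .
\end{align*}

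The only step needing care is evaluating the sum over even divisors. If $n$ is odd, $n$ has no even divisors, so this sum is empty and equals $0=\sigma_s(n/2)$ by the stated convention. If $n$ is even, the map $k\mapsto 2k$ is a bijection from the divisors $k$ of $n/2$ onto the even divisors $j$ of $n$. Therefore
\begin{equation*}
\sum_{\substack{j\mid n\\ j\ {\rm even}}}j^s=\sum_{k\mid n/2}(2k)^s=2^s\sigma_s(n/2).
\end{equation*}

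Substituting this into the first display gives $\sigma_s(n)-2\cdot 2^s\sigma_s(n/2)=\sigma_s(n)-2^{s+1}\sigma_s(n/2)$, which is \eqref{DSM8M20}. There is no real obstacle beyond this bijection; it holds for every positive integer $s$.
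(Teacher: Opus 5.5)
Your proof is correct and follows the paper's argument exactly: split by the parity of $j$, rewrite the odd-divisor sum as $\sigma_s(n)$ minus the even-divisor sum, and reindex the even divisors via $j=2k$ with $k\mid n/2$. Your explicit treatment of odd $n$, where the even-divisor sum is empty and equals $0=\sigma_s(n/2)$, is a small point the paper leaves implicit.
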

\begin{proof} Let $s$ be any positive integer.
Considering  the odd and even
divisors of $n$ separately, we obtain
\begin{align}
\sum_{j\mid n}(-1)^{j-1}j^s
&=
\sum_{\substack{j\mid n\\j\ {\rm odd}}}j^s
-
\sum_{\substack{j\mid n\\j\ {\rm even}}}j^s
\notag\\
&=
\left(
\sum_{j\mid n}j^s
-
\sum_{\substack{j\mid n\\j\ {\rm even}}}j^s
\right)
-
\sum_{\substack{j\mid n\\j\ {\rm even}}}j^s
\notag\\
&=
\sigma_s(n)
-
2\sum_{\substack{j\mid n\\j\ {\rm even}}}j^s
\notag\\
&=
\sigma_s(n)
-
2\sum_{d\mid n/2}(2d)^s
\notag\\
&=
\sigma_s(n)-2^{s+1}\sigma_s(n/2).\notag
\end{align}
\end{proof}

\section{Proof of the conjecture \eqref{sc} of $SOME(n)$}
This section is devoted to establishing the conjecture \eqref{sc} due to Andrews and Dastidar.

\begin{theorem}
Let $\alpha\geq1$ and $\lambda\geq0$ be integers such that
$24\lambda\equiv1\pmod{5^\alpha}$.
Then
$$SOME(\lambda)\equiv0\pmod{5^\alpha}.$$
\end{theorem}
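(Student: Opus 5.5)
The plan is to reduce the statement to two pieces. The first is the classical Ramanujan–Watson congruence for $p(n)$, weighted by $n$. The second is an analogous statement for the product of $1/f_1$ with a weight-two Eisenstein series on $\Gamma_0(2)$, which I would handle by the Watson/Atkin $U_5$-iteration.

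\emph{Step 1: rewrite the generating function.} By \eqref{SMN1}, \eqref{SDI4} becomes
$$\sum_{n\ge0}SOME(n)q^n=\frac{1}{f_1}\sum_{n\ge1}\bigl(\sigma(n)-4\sigma(n/2)\bigr)q^n .$$
Write $E_2(q)=1-24\sum_{n\ge1}\sigma(n)q^n$ and $G(q):=2E_2(q^2)-E_2(q)=1+24\sum_{n\ge1}\bigl(\sigma(n)-2\sigma(n/2)\bigr)q^n$. Here $G$ is the holomorphic weight-two modular form spanning $M_2(\Gamma_0(2))$. The inner sum then equals $\frac{1}{24}\bigl(E_2(q)-1\bigr)\cdot(-1)+\frac{1}{12}\bigl(G(q)-1\bigr)$; I will recheck this bookkeeping carefully. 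By \eqref{DSMN3}, with $D=q\,\frac{d}{dq}$, we have $\frac{1}{f_1}\sum\sigma(n)q^n=D(1/f_1)=\sum np(n)q^n$. Hence
$$SOME(n)=-np(n)+\tfrac{1}{12}\bigl(a(n)-p(n)\bigr),\qquad \sum_{n\ge0}a(n)q^n:=\frac{G(q)}{f_1}.$$
Since $12$ is a $5$-adic unit, Watson's theorem gives $p(\lambda)\equiv0\pmod{5^\alpha}$ whenever $24\lambda\equiv1\pmod{5^\alpha}$. So it suffices to prove $a(\lambda)\equiv0\pmod{5^\alpha}$ for the same $\lambda$.

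\emph{Step 2: set up the $U_5$ iteration for $G/f_1$.} The key structural fact is that $G$ is a Hecke eigenform with $G\mid T_5=6G$, so
$$G\mid U_5=6G(q)-5G(q^5).$$
Following Watson's proof for $p(n)$, I would define $L_0=1$ and let $L_{2k-1}$, $L_{2k}$ be the successive images under the operators $L\mapsto U_5(\Phi\,L)$ and $L\mapsto U_5(L)$. Here $\Phi=q\,f_{25}/f_1$, or $q^{5}f_{25}^{6}/f_{1}^{6}$ at the odd steps, exactly as in Watson/Garvan. These are normalised so that $\sum_n a(5^\alpha n+\lambda_\alpha)q^{n}$ is $L_\alpha$ times an explicit eta quotient, with the extra factor $G$ carried along.

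Because $G(q)$ and $G(q^5)$ are modular on $\Gamma_0(10)$ rather than $\Gamma_0(5)$, I would work in the ring of modular functions on $\Gamma_0(10)$. I would then express everything as polynomials in a Hauptmodul of $\Gamma_0(5)$, $t=q\,f_5^6/f_1^6$ (or the corresponding $\Gamma_0(10)$ function), times the two weight-two forms $G(q)$ and $G(q^5)$. The Hecke relation above, together with the classical modular equation for $U_5(t^j)$, lets me close the iteration on a finite-rank module spanned by $G(q)\,t^j$ and $G(q^5)\,t^j$.

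\emph{Step 3: $5$-adic valuation bookkeeping.} As in Watson's argument, I would prove by induction that the coefficients of $L_\alpha$ in this basis satisfy $\nu_5\ge\alpha+\lfloor(\text{something})\,j\rfloor$. The inductive step uses the known $5$-adic orders of the entries of the modular-equation matrix for $U_5(t^j)$, and the fact that the new Hecke contribution $-5G(q^5)$ already carries a factor $5$. This gives $a(5^\alpha n+\lambda_\alpha)\equiv0\pmod{5^\alpha}$, which is the claim.

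The main obstacle is Step 3. One must check that mixing in $G(q)$ and $G(q^5)$ does not destroy the valuation growth at the low-order terms. The coefficient $6\equiv1\pmod5$ in $G\mid U_5$ contributes no extra factor of $5$, so the gain must come entirely from the $t$-modular equation, as for $p(n)$. The first thing I would try is to factor $G(q)=G(q^5)+\bigl(G(q)-G(q^5)\bigr)$ and verify that $G(q)-G(q^5)$ is divisible in the ring by a power of $t$ with a factor $5$. If that works, it reduces the problem to Watson's original estimates.
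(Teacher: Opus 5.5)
Your Step 1 is the paper's reduction. Your $a(n)$ is exactly the paper's $c(2n)$: the paper shows $\sum c(2n)q^n=(2E_2(q^2)-E_2(q))/f_1$ using $\sigma_{od}(2n)=\sigma_{od}(n)$. Your identity $SOME(n)=-np(n)+\frac{1}{12}\bigl(a(n)-p(n)\bigr)$ is \eqref{SOMEC18}. There is a sign slip in your intermediate decomposition. The inner sum equals $+\frac{1}{24}(E_2(q)-1)+\frac{1}{12}(G(q)-1)$, and your sign would produce $+np(n)$. Your final formula is nevertheless the correct one; for example, $a(1)=25$ gives $SOME(1)=-1+2=1$.

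The gap is everything after Step 1. The congruence $a(\lambda)\equiv0\pmod{5^\alpha}$ for $24\lambda\equiv1\pmod{5^\alpha}$ is all of the content beyond Watson's theorem, and Steps 2--3 do not establish it. The induction is stated with an unspecified exponent ``$\alpha+\lfloor(\text{something})j\rfloor$''. There is no base case, no explicit modular equation, and no transition matrix whose $5$-adic orders are checked.

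The tools you name also do not close the iteration on their own. The relation $G\mid U_5=6G-5G(q^5)$ only evaluates $U_5$ on $G$ times a function of $q^5$. Each step of your iteration instead needs $U_5\bigl(\Phi\,t^jG(q)\bigr)$, where the cofactor is not a function of $q^5$. Handling that requires two things: a modular equation for $U_5$ on functions of level $50$ landing in level $10$, rather than the classical one for $t=qf_5^6/f_1^6$; and an expression of $G(q)$ and $G(q^5)$ in a fixed basis over $\mathbb{Z}_{(5)}[t]$. That is precisely where the $5$-adic orders must be controlled, and you leave it as ``the main obstacle''. Your proposed way around it fails as stated, because $G(q)-G(q^5)=24q+24q^2+96q^3+\cdots$ carries no factor of $5$.

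The paper avoids this computation entirely. It cites Wang and Yang's theorem \eqref{WY1}--\eqref{WY2}, which yields \eqref{WY7} whenever $12N\equiv1\pmod{5^\alpha}$, and applies it with $N=2\lambda$. To complete your proof you can make the identification $a(n)=c(2n)$ and invoke that result the same way. Alternatively, you would have to actually carry out the level-$10$ Atkin--Watson analysis with explicit valuation bounds.
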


\begin{proof}Define $c(n)$, as in \cite[(1.28)]{WY}, by
\begin{equation}\label{SOMEC10}
\sum_{n=0}^{\infty}c(n)q^n
=
\dfrac{2E_2(q^2)-E_2(q)}{f_2}
=
\dfrac{\mathcal{M}(q)}{f_2},
\end{equation}
where $E_2(q)$ \cite[(4.1.7)]{bc} is the classical quasimodular Eisenstein series defined by
\begin{equation}\label{SMNE1}
    E_2(q)=1-24\sum_{k=1}^{\infty}\dfrac{kq^k}{1-q^k},
\end{equation}
and
\begin{equation}\label{SOMEC4}
\mathcal{M}(q):=\sum_{n=0}^{\infty}m(n)q^n=2E_2(q^2)-E_2(q).
\end{equation}
Employing \eqref{DSMN3} and  \eqref{SMNE2}  in \eqref{SMNE1}, we obtain
\begin{equation}\label{SOMEC5}
E_2(q)=1-24\sum_{n=1}^{\infty}\sigma(n)q^n.
\end{equation}
Wang and Yang \cite[Theorem~1.1]{WY} proved that, for $k\geq1$ and $n\geq0$,
\begin{align}
c\left(5^{2k-1}n+\dfrac{7\cdot5^{2k-1}+1}{12}\right)
&\label{WY1}\equiv0\pmod{5^{2k-1}}, \\\intertext{and}
c\left(5^{2k}n+\dfrac{11\cdot5^{2k}+1}{12}\right)
&\label{WY2}\equiv0\pmod{5^{2k}}. 
\end{align}
For integers $N\ge 1$ and $\alpha\geq1$, suppose that
\begin{equation}\label{WY3}
12N\equiv1\pmod{5^\alpha}.
\end{equation}
Set,
\begin{equation}\label{WY5}
r_\alpha=
\begin{cases}
\dfrac{7\cdot5^\alpha+1}{12}, & \alpha \text{ is odd},\\[8pt]
\dfrac{11\cdot5^\alpha+1}{12}, & \alpha \text{ is even}.
\end{cases}
\end{equation}
Then $r_\alpha\in\mathbb Z$, \,$0<r_\alpha<5^\alpha$, and
$12r_\alpha\equiv1\pmod{5^\alpha}$. Since
$\gcd(12,5^\alpha)=1$, from \eqref{WY3}, we obtain
\begin{equation}\label{WY6}
N\equiv r_\alpha\pmod{5^\alpha}.    
\end{equation}
Then for some $n\geq0$, from  \eqref{WY6}, we obtain
\begin{equation}\label{WYe7}
N=5^\alpha n+r_\alpha=
\begin{cases}
5^{2k-1}n+\dfrac{7\cdot5^{2k-1}+1}{12},
& \alpha=2k-1,\\[8pt]
5^{2k}n+\dfrac{11\cdot5^{2k}+1}{12},
& \alpha=2k.
\end{cases}    
\end{equation}
Employing \eqref{WYe7} in \eqref{WY1} (or \eqref{WY2}) if  $\alpha$ is odd (or even), we obtain
\begin{equation}\label{WY7}
c(N)\equiv0\pmod{5^\alpha},
\end{equation}
whenever $$12N\equiv1\pmod{5^\alpha}.$$ 
Let  $\sigma_{od}(n)$ denote the sum of odd divisors of a positive integer $n$, then
\begin{align}
\sum_{n=1}^{\infty}\sigma_{od}(n)q^n
&=\sum_{\substack{m\geq1\\ m\ {\rm odd}}}
\sum_{r=1}^{\infty}mq^{mr} \notag\\
&=\sum_{m=1}^{\infty}\dfrac{mq^m}{1-q^m}
-\sum_{m=1}^{\infty}\dfrac{2mq^{2m}}{1-q^{2m}} \notag\\
&=\sum_{n=1}^{\infty}\sigma(n)q^n
-2\sum_{n=1}^{\infty}\sigma(n)q^{2n} \notag\\
&\label{SOD1}=\sum_{n=1}^{\infty}
\left(\sigma(n)-2\sigma(n/2)\right)q^n, 
\end{align}
where we used the fact that $\sigma(n/2)=0$ if $n$ is an odd  integer.\\\\
Now, employing \eqref{SOMEC5} and \eqref{SOD1} in \eqref{SOMEC4}, we obtain
\begin{align}
\sum_{n=0}^{\infty}m(n)q^n
&=2\left(1-24\sum_{n=1}^{\infty}\sigma(n)q^{2n}\right)-\left(1-24\sum_{n=1}^{\infty}\sigma(n)q^{n}\right) \notag\\
&=1+24\sum_{n=1}^{\infty}
\left(\sigma(n)-2\sigma(n/2)\right)q^n \notag\\
&\label{SOMEC7}=1+24\sum_{n=1}^{\infty}\sigma_{od}(n)q^n.
\end{align}
Since  $\sigma_{od}(2n)=\sigma_{od}(n)$, \eqref{SOMEC7} implies
\begin{equation}\label{SOMEC9}
m(2n)=m(n).
\end{equation}
Employing \eqref{SDI3} and  \eqref{SOMEC4} in \eqref{SOMEC10}, we obtain
\begin{align}
\sum_{n=0}^{\infty}c(n)q^n
&=
\mathcal{M}(q)\sum_{r=0}^{\infty}p(r)q^{2r} \notag\\
&=\left(\sum_{n=0}^{\infty}m(n)q^n\right)\left(\sum_{r=0}^{\infty}p(r)q^{2r}\right) \notag\\
&=\sum_{n=0}^{\infty}\sum_{r=0}^{\lfloor n/2\rfloor}m(n-2r)p(r)q^n.\notag
\end{align}
Extracting the terms with even powers of $q$, replacing $q^2$ by $q$ and simplifying by  employing \eqref{SOMEC9}, we obtain
\begin{align}
    \sum_{n=0}^{\infty}c(2n)q^n
    &=\sum_{r=0}^{n}m(2n-2r)p(r)q^n\notag\\
    &=\sum_{r=0}^{n}m(n-r)p(r)q^n \notag \\
&=\dfrac{\mathcal{M}(q)}{f_1}.\notag
\end{align}
Therefore, \eqref{SOMEC4} implies
\begin{equation}\label{SOMEC13}
\sum_{n=0}^{\infty}c(2n)q^n
=
\dfrac{2E_2(q^2)-E_2(q)}{f_1}.
\end{equation}
Again, from  \eqref{SDI4}, we note that
\begin{equation}\label{SOMEC14}
\sum_{n=0}^{\infty}SOME(n)q^n
=
\dfrac{1}{f_1}\sum_{j=1}^{\infty}\dfrac{q^j}{(1+q^j)^2}.
\end{equation}
Combining  \eqref{SMN1} and \eqref{SOMEC5}, we obtain
\begin{align}
\sum_{j=1}^{\infty}\dfrac{q^j}{(1+q^j)^2}
&=\sum_{j=1}^{\infty}\sum_{r=1}^{\infty}
(-1)^{r-1}r q^{jr} \notag\\
&=\sum_{n=1}^{\infty}
\left(\sigma(n)-4\sigma(n/2)\right)q^n \notag\\
&\label{SOMEC15}=\dfrac{4E_2(q^2)-E_2(q)-3}{24}.
\end{align}
Also, by logarithmic differentiation, we obtain
\begin{equation}\label{po1}
q\dfrac{d}{dq}\dfrac{1}{f_1}=\dfrac{1}{f_1}\sum_{n=1}^{\infty}\sigma(n)q^n.\end{equation}
Employing \eqref{SOMEC5} in \eqref{po1}, we obtain
\begin{equation}\label{SMN5}
    q\dfrac{d}{dq}\dfrac{1}{f_1}=\dfrac{1-E_2(q)}{24f_1}.
\end{equation}
Employing \eqref{SOMEC13}, \eqref{SOMEC15}, \eqref{po1}, and \eqref{SMN5} in \eqref{SOMEC14}, we obtain
\begin{align}
24\sum_{n=0}^{\infty}SOME(n)q^n
&=\dfrac{1}{f_1}\left(4E_2(q^2)-E_2(q)-3\right) \notag\\
&=2\dfrac{2E_2(q^2)-E_2(q)}{f_1}
+\dfrac{E_2(q)-3}{f_1} \notag\\
&=2\sum_{n=0}^{\infty}c(2n)q^n
-24q\dfrac{d}{dq}\dfrac{1}{f_1}-\dfrac{2}{f_1}\notag\\
&\label{SOMEC17}=2\sum_{n=0}^{\infty}c(2n)q^n-24\sum_{n=0}^{\infty}np(n)q^n-2\sum_{n=0}^{\infty}p(n)q^n.
\end{align}
Comparing the coefficients of $q^n$ on both sides of \eqref{SOMEC17},
we obtain
\begin{equation}\label{SOMEC18}
12SOME(n)=c(2n)-(12n+1)p(n).
\end{equation}
Setting $N=2\lambda$ in \eqref{WY7}, we have
\begin{equation}\label{SOMEC20}
c(2\lambda)\equiv0\pmod{5^\alpha}
\end{equation}whenever $24\lambda\equiv1\pmod{5^\alpha}$.
Also, from \cite{R} (see also \cite{Wat}), for $24\lambda\equiv1\pmod{5^\alpha}$,
\begin{equation}\label{SOMEC22}
p(\lambda)\equiv0\pmod{5^\alpha}.
\end{equation} 
Employing \eqref{SOMEC20} and \eqref{SOMEC22} in \eqref{SOMEC18}, we obtain
\begin{equation}\label{a1}
12SOME(\lambda)\equiv0\pmod{5^\alpha}.\end{equation}
Since $\gcd(12,5^\alpha)=1$, the desired result easily follows from \eqref{a1}.
\end{proof}

\section{Congruences modulo 2, 4 and 8 of $DSOME(n)$ and proof of the conjecture \eqref{SDI11} }
 In this section, we  prove some other congruences modulo 2, 4 and 8 satisfied by the function $DSOME(n)$. We establish  the conjecture \eqref{SDI11} in Theorem \ref{nd1} due to Baruah and Gogoi.  

Let  $A(q):=\sum_{r=0}^{\infty}a(r)q^r$ and $B(q)=\sum_{s=0}^{\infty}b(s)q^s$ be any two formal power series. Let $U_2$ be an operator that extracts terms with even indices from a power series. That is,   $U_2[\sum_{r=0}^{\infty}a(r)q^r]=\sum_{r=0}^{\infty}a(2r)q^r$. Then, 
\begin{equation}\label{DSE16}
U_2\left[A(q^2)B(q)\right]
=
U_2\left[
\sum_{r,s\geq0}a(r)b(s)q^{2r+s}
\right]
=
\sum_{r,j\geq0}a(r)b(2j)q^{r+j}
=
A(q)U_2\left[B(q)\right].
\end{equation}
and 
\begin{equation}\label{DSE17}
U_2\left[q\dfrac{d}{dq}B(q)\right]
=
U_2\left[
\sum_{s=0}^{\infty}s\cdot b(s)q^s
\right]
=
\sum_{s=0}^{\infty}2s\cdot b(2s)q^s
=
2q\dfrac{d}{dq} U_2\left[B(q)\right].
\end{equation}

\begin{theorem}\label{DSM2THM}
For any integer $n\ge0$, we have
$$DSOME(n)\equiv
\begin{cases}
1\pmod2,
& n=24j^2+10j+1 \text{ or \;$24j^2+22j+5$}\text{ for some }j\in\mathbb Z,\\
0\pmod2,
& \text{otherwise}.
\end{cases}$$
\end{theorem}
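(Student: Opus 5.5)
The plan is to work directly from the generating function \eqref{bg1} rather than from \eqref{SDI10}, since the factor $\tfrac18$ in \eqref{SDI10} makes reduction modulo $2$ awkward. Modulo $2$ we have $(-q;q)_\infty=f_2/f_1\equiv f_1 \pmod 2$, because $f_2\equiv f_1^2\pmod 2$. In the Lambert-type series we may drop the sign $(-1)^{m-1}$ and replace $(1+q^m)^2$ by $(1-q^m)^2$, since both changes are invisible modulo $2$ coefficientwise. This gives
\begin{equation*}
\sum_{n=0}^{\infty}DSOME(n)q^n\equiv f_1\sum_{m=1}^{\infty}\dfrac{q^m}{(1-q^m)^2}\pmod 2 .
\end{equation*}
Expanding $q^m/(1-q^m)^2=\sum_{j\ge1} j q^{mj}$ shows that the sum equals $\sum_{n\ge1}\sigma(n)q^n$. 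Alternatively, one can reuse the computation in \eqref{DSE13}, which gives the same series modulo $4$ for the version with $(1+q^m)^2$.

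Next I will invoke \eqref{DSMN3}: $f_1\sum_{n\ge1}\sigma(n)q^n=-q\frac{d}{dq}f_1$. Using Euler's pentagonal expansion \eqref{DSE19},
\begin{equation*}
\sum_{n=0}^{\infty}DSOME(n)q^n\equiv \sum_{k=-\infty}^{\infty}(-1)^{k}\,\dfrac{k(3k-1)}{2}\,q^{k(3k-1)/2}\pmod 2 .
\end{equation*}
The generalized pentagonal numbers $k(3k-1)/2$, $k\in\mathbb Z$, are pairwise distinct. Hence no cancellation occurs, and $DSOME(n)$ is odd exactly when $n=k(3k-1)/2$ for some $k$ with $k(3k-1)/2$ odd; otherwise it is even.

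The remaining step, which is the only place requiring care, is the parity analysis of $P(k)=k(3k-1)/2$ according to $k\bmod 4$. For $k=4j$ one gets $P(k)=2j(12j-1)$, which is even, and for $k=4j+3$ one gets $P(k)=(4j+3)(6j+4)$, also even. For $k=4j+1$ one gets $P(k)=(4j+1)(6j+1)=24j^2+10j+1$, which is odd, and for $k=4j+2$ one gets $P(k)=(2j+1)(12j+5)=24j^2+22j+5$, also odd. As $j$ ranges over $\mathbb Z$, these four residue classes cover every $k\in\mathbb Z$, negative $k$ included. This yields exactly the two quadratic families in Theorem \ref{DSM2THM}. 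I expect no genuine obstacle beyond justifying the mod-$2$ replacements inside the infinite series coefficientwise, which is routine.
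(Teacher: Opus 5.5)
Your proof is correct. It differs from the paper's only in how you reach the key congruence $\sum_{n\ge0}DSOME(n)q^n\equiv q\frac{d}{dq}f_1 \pmod 2$. From that point on, the pentagonal-number parity analysis is the same as the paper's. You also state explicitly that the $P_k$ are pairwise distinct, so no cancellation occurs, which the paper leaves implicit.

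The paper gets the key congruence combinatorially. In a partition of $n$ into distinct parts, (sum of odd parts) $-$ (sum of even parts) $\equiv n \pmod 2$, so $DSOME(n)\equiv n\,p_d(n)\pmod 2$. The generating function of $n\,p_d(n)$ is $q\frac{d}{dq}(f_2/f_1)$, and the paper then differentiates the congruence $f_2/f_1\equiv f_1\pmod 2$. This is legitimate because $a_n\equiv b_n$ implies $na_n\equiv nb_n$.

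You instead start from \eqref{bg1}. You reduce $(-q;q)_\infty$ to $f_1$ and the signed Lambert series to $\sum\sigma(n)q^n$ modulo $2$, and then apply the exact identity \eqref{DSMN3}.

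The paper's route is shorter and needs neither the Lambert series expansion nor \eqref{DSMN3}. Yours is purely generating-function based and needs no combinatorial parity observation. It also ties in with the paper's later work: your conclusion is the mod-$2$ reduction of the mod-$4$ congruence \eqref{DSE15}, $\sum DSOME(n)q^n\equiv-\phi(-q^2)\,q\frac{d}{dq}f_1\pmod 4$, since $\phi(-q^2)\equiv1\pmod 2$. You reach it by the same Lambert-series-to-$\sigma$-to-logarithmic-derivative mechanism, but directly from \eqref{bg1} rather than through the theta-function manipulations based on \eqref{SDI10}.
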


\begin{proof}
For every partition of $n$ into distinct parts,  the difference
between the sum of its odd parts and the sum of its even parts is congruent
to $n$ modulo $2$. Therefore,
\begin{equation*}
DSOME(n)\equiv n p_d(n)\pmod{2}
\end{equation*}
and by using \eqref{DSM8M25}, we obtain
\begin{align}
\sum_{n=0}^{\infty}DSOME(n)q^n
&\equiv \sum_{n=0}^{\infty}n p_d(n)q^n\pmod2\notag\\
&\equiv q\dfrac{d}{dq}
  \left(\sum_{n=0}^{\infty}p_d(n)q^n\right)\pmod2\notag\\
&\label{m2w1}\equiv q\dfrac{d}{dq}\left(\dfrac{f_2}{f_1}\right)
\pmod{2}.
\end{align}
Also,
\begin{equation}\label{dw1}
\dfrac{f_2}{f_1}=(-q;q)_\infty
 =\prod_{m=1}^{\infty}(1+q^m)\equiv\prod_{m=1}^{\infty}(1-q^m)
 =f_1
\pmod{2}.
\end{equation}
Differentiating \eqref{dw1} and then employing \eqref{m2w1},
we obtain
\begin{align}
\sum_{n=0}^{\infty}DSOME(n)q^n\equiv q\dfrac{d}{dq}f_1
&\label{D4}\equiv-\phi(-q^2)q\dfrac{d}{dq}f_1
 \equiv-q\dfrac{d}{dq}f_1
 \equiv q\dfrac{d}{dq}f_1
\pmod{2}.
\end{align}
From \eqref{DSE19}, we note that 
\begin{equation}\label{DSM2M6}
q\dfrac{d}{dq} f_1
=
q\dfrac{d}{dq}
\left(
\sum_{k\in\mathbb Z}
(-1)^kq^{k(3k-1)/2}
\right)
=
\sum_{k\in\mathbb Z}
(-1)^k
\dfrac{k(3k-1)}{2}
q^{k(3k-1)/2}.
\end{equation}
Employing \eqref{DSM2M6} in \eqref{D4}, we obtain
\begin{equation}\label{DSM2M7}
\sum_{n=0}^{\infty}DSOME(n)q^n
\equiv
\sum_{k\in\mathbb Z}
\dfrac{k(3k-1)}{2}
q^{k(3k-1)/2}
\pmod2.
\end{equation}
It is easily seen that, 
\begin{equation}\label{DSM2M10}
\dfrac{k(3k-1)}{2}\equiv1\pmod2
\text{ if and only if }
k\equiv1,2\pmod4.
\end{equation}
If $k\equiv1\pmod4$, then $k=4j+1$ for a unique
$j\in\mathbb Z$, and
\begin{equation}\label{DSM2M11}
\dfrac{k(3k-1)}{2}
=
\dfrac{(4j+1)(3(4j+1)-1)}{2}
=
24j^2+10j+1.
\end{equation}
Again, if $k\equiv2\pmod4$, then $k=4j+2$ for  unique
$j\in\mathbb Z$, and
\begin{equation}\label{DSM2M12}
\dfrac{k(3k-1)}{2}
=
\dfrac{(4j+2)(3(4j+2)-1)}{2}
=
24j^2+22j+5.
\end{equation}
Using   \eqref{DSM2M10}, \eqref{DSM2M11} and \eqref{DSM2M12} in \eqref{DSM2M7}, we obtain
\begin{align}
\sum_{n=0}^{\infty}DSOME(n)q^n
&\equiv
\sum_{\substack{k\in\mathbb Z\\k\equiv1\;(\mathrm{mod}\;4)}}
q^{k(3k-1)/2}
+
\sum_{\substack{k\in\mathbb Z\\k\equiv2\;(\mathrm{mod}\;4)}}
q^{k(3k-1)/2}
\pmod2
\notag\\
&\label{DSM2M15}\equiv
\sum_{j\in\mathbb Z}
q^{24j^2+10j+1}
+
\sum_{j\in\mathbb Z}
q^{24j^2+22j+5}
\pmod2.
\end{align}
Now equating the  coefficients of
$q^n$ in \eqref{DSM2M15}, we complete the proof.
\end{proof}

The proof of Corollary \ref{crm2} is similar to the proof of Corollary \ref{cor:DSOME-complete-family}, hence omitted.

\begin{corollary}\label{crm2}
Let $M$ and $r$ be integers such that $M\geq 1$ and $0\leq r<M$. Define
$$\mathcal{P}_{M}
=
\left\{
24x^{2}+10x+1\pmod{M}:x\in\mathbb{Z}
\right\}
\cup
\left\{
24x^{2}+22x+5\pmod{M}:x\in\mathbb{Z}
\right\}.$$
Then, we have
\begin{enumerate}
\item[$(i)$]
$DSOME(Mn+r)\equiv0\pmod2
\text{ holds for all integer $n\geq0$ if and only if }
r\notin\mathcal{P}_{M}.$
\item[$(ii)$] If $\gcd(M,6)=1$, then 
$DSOME(Mn+r)\equiv0\pmod2$
\text{ holds for all integer} $n\geq0$, \text{ if and only if }
$24r+1$ is not congruent modulo $M$ to a perfect square.
\item[$(iii)$] If $p\geq5$ is a prime, then
$$DSOME(pn+r)\equiv0\pmod2
\text{ holds for all integer $n\geq0$, if and only if }
\left(\dfrac{24r+1}{p}\right)=-1, $$
where $\left(\dfrac{\cdot}{p}\right)$ denotes the Legendre symbol.
\end{enumerate}
\end{corollary}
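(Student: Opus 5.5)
Everything will follow from Theorem \ref{DSM2THM}: $DSOME(n)$ is odd exactly when $n\in S:=\{24j^2+10j+1\}\cup\{24j^2+22j+5\}$, $j\in\mathbb Z$. Equivalently, by \eqref{DSM2M7}, $DSOME(n)$ is odd exactly when $n=k(3k-1)/2$ for some $k\equiv 1,2\pmod 4$.

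\emph{Part (i).} Suppose $r\in\mathcal P_M$. Then some $x$ gives, say, $24x^2+10x+1\equiv r\pmod M$. Replacing $x$ by $x+tM$ changes the value by a multiple of $M$, and the quadratic tends to $+\infty$. So we can choose $x$ with $s=24x^2+10x+1\ge r$, which means $s=Mn+r$ with $n\ge 0$. Then $DSOME(Mn+r)$ is odd, and the congruence fails. The same argument handles the second quadratic. Conversely, if $r\notin\mathcal P_M$, then no element of $S$ is $\equiv r\pmod M$. Hence $DSOME(Mn+r)$ is even for every $n$.

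\emph{Part (ii).} We now need $r\in\mathcal P_M\iff 24r+1$ is a square mod $M$, assuming $\gcd(M,6)=1$. Multiply by $24$ and add $1$:
\[
24(24x^2+10x+1)+1=(24x+5)^2,\qquad 24(24x^2+22x+5)+1=(24x+11)^2.
\]
Since $24$ is invertible mod $M$, the condition $r\equiv 24x^2+10x+1$ is equivalent to $24r+1\equiv(24x+5)^2\pmod M$, and similarly for the second family. So $r\in\mathcal P_M$ implies $24r+1$ is a square mod $M$.

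For the converse, suppose $24r+1\equiv y^2\pmod M$. We need an integer $z\equiv y\pmod M$ with $z\equiv 5$ or $11\pmod{24}$; this is possible by the Chinese remainder theorem since $\gcd(M,24)=1$. Then $z=24x+5$ (or $24x+11$), and the identity above gives $r\in\mathcal P_M$. This CRT lift is the only subtle step. It is where $\gcd(M,6)=1$ is really used: both to invert $24$ and to combine the moduli $M$ and $24$.

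\emph{Part (iii).} Take $M=p\ge5$ prime in (ii). If $p\mid 24r+1$, then $24r+1\equiv0^2$, which is a square. Otherwise $24r+1$ is a nonzero square mod $p$ exactly when its Legendre symbol is $1$. Hence $24r+1$ is a nonsquare mod $p$ exactly when $\left(\frac{24r+1}{p}\right)=-1$, which gives the stated criterion.
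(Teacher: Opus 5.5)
Your proposal is correct and is essentially the argument the paper intends; the paper omits this proof as analogous to Corollary~\ref{cor:DSOME-complete-family}. Part (i) is read off from Theorem~\ref{DSM2THM}. Part (ii) completes the square via $24(24x^2+10x+1)+1=(24x+5)^2$ and solves $24x+5\equiv y\pmod M$ using $\gcd(M,24)=1$; your CRT lift is exactly this linear congruence. Part (iii) counts $0$ as a square modulo $p$. The shift $x\mapsto x+tM$ in (i) is harmless but unnecessary: both quadratics take only positive values at integers, so any value congruent to $r$ is automatically of the form $Mn+r$ with $n\ge 0$.
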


\begin{remark}
From Corollary \ref{crm2}, for every integer $n\geq0$, one can easily obtain  congruences  modulo 2 for
$DSOME(n)$ by considering different values of $M$. For example, setting $M=2$, $5$, $7$ and $11$, we have the following congruences, respectively:
\begin{align*}
DSOME(2n)
&\equiv0\pmod2,\\
DSOME(5n+t)
&
\equiv0\pmod2,\quad t=3, 4, \\
DSOME(7n+t)
&\equiv0\pmod2, \quad t=3, 4, 6\\\intertext{and}
DSOME(11n+t)&
\equiv0\pmod2, \quad t=3, 6, 8, 9, 10.
\end{align*}
\end{remark}

Proof of Corollary \ref{cio} is similar to the proof of Corollary \ref{cor:DSOME-odd-valuation}, hence omitted.

\begin{corollary}\label{cio}
Let $p\geq5$ be a prime and  $s$ be any nonnegative integer. If $B$ is a positive
integer such that $p\nmid B$ and
$
p^{2s+1}B\equiv1\pmod{24},
$
then
\begin{equation*}
DSOME
\left(
\dfrac{p^{2s+1}B-1}{24}
\right)
\equiv0\pmod2.
\end{equation*}
\end{corollary}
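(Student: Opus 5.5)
The plan is to use Theorem \ref{DSM2THM}. It says that $DSOME(n)$ is odd exactly when $n=k(3k-1)/2$ with $k\equiv1,2\pmod 4$, and in particular only if $n$ is a generalized pentagonal number. So it suffices to show that $N=(p^{2s+1}B-1)/24$ is not a generalized pentagonal number.

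First note that $N=k(3k-1)/2$ is equivalent to
\[
24N+1=36k^2-12k+1=(6k-1)^2 .
\]
Hence whenever $DSOME(N)$ is odd, $24N+1$ is a perfect square. For our $N$ we have $24N+1=p^{2s+1}B$, and the hypothesis $p^{2s+1}B\equiv1\pmod{24}$ guarantees that $N$ is a nonnegative integer.

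Next compare $p$-adic valuations. Since $p\nmid B$, we get $v_p(p^{2s+1}B)=2s+1$, which is odd. A nonzero perfect square has even $p$-adic valuation, and $p^{2s+1}B\ge p>0$. Therefore $24N+1$ cannot be a square, so $N$ is not of the form $k(3k-1)/2$ for any $k\in\mathbb Z$. In particular $N$ is not of either form $24j^2+10j+1$ or $24j^2+22j+5$ listed in Theorem \ref{DSM2THM}, since these are the cases $k=4j+1$ and $k=4j+2$. Theorem \ref{DSM2THM} then gives $DSOME(N)\equiv0\pmod 2$.

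I expect no real obstacle. The only points needing care are the translation from the two quadratic families to the single square condition, via the identities $24(24j^2+10j+1)+1=(24j+5)^2$ and $24(24j^2+22j+5)+1=(24j+11)^2$, and the observation that $N\ge0$ is an integer.
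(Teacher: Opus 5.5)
Your proof is correct and is the argument the paper intends: the paper omits this proof as analogous to that of Corollary \ref{cor:DSOME-odd-valuation}, which likewise argues that a nonzero residue would force $p^{2s+1}B$ to be a perfect square via the relevant theorem (here Theorem \ref{DSM2THM}), contradicting the odd exponent of $p$. Your explicit identities $24(24j^2+10j+1)+1=(24j+5)^2$ and $24(24j^2+22j+5)+1=(24j+11)^2$ supply the one step the paper leaves implicit.
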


\begin{theorem}\label{T1}
For any integer $n\ge0$, we have
\begin{equation*}
DSOME(2n)\equiv
\begin{cases}
2\pmod4,
& n=48k^2+14k+1 \text{ or \;$48k^2+46k+11$}\text{ for some }k\in\mathbb Z,\\
0\pmod4,
& \text{otherwise}.
\end{cases}
\end{equation*}
\end{theorem}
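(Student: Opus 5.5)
The plan is to work modulo $4$ with the representation \eqref{SDI10} and reduce everything to products of $f_1$ with powers of $X(q)$ from \eqref{xq}. Since $\phi(-q)^4=f_1^8/f_2^4$ by \eqref{DSE3}, we have $f_1^7/f_2^3=\phi(-q)^4\,f_2/f_1$. Hence \eqref{SDI10} becomes the exact identity
\begin{equation*}
\sum_{n=0}^{\infty}DSOME(n)q^n=\dfrac{f_2}{f_1}\cdot\dfrac{1-(1+2X)^4}{8}
=-\dfrac{f_2}{f_1}\left(X+3X^2+4X^3+2X^4\right),
\end{equation*}
where $X=X(q)$. This removes the division by $8$, so reduction modulo $4$ is now legitimate.

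Next, note that $f_2/f_1=f_1/\phi(-q)=f_1/(1+2X)$ and $(1+2X)^{-1}\equiv 1-2X\pmod 4$. Multiplying out gives
\begin{equation*}
\sum_{n=0}^{\infty}DSOME(n)q^n\equiv -f_1\left(X+X^2+2X^3+2X^4\right)\pmod 4 .
\end{equation*}
In the terms carrying a factor $2$, only $X$ modulo $2$ matters. Using $X^2\equiv X(q^2)\pmod 2$, we get $2X^3+2X^4\equiv 2X(q)X(q^2)+2X(q^4)\pmod 4$. For the non-doubled part $X+X^2$, I would write $X+X^2=X(1+X)$ and express it through $\phi(-q)$: it equals $\bigl(\phi(-q)^2-1\bigr)/4$. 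It is then a generating function for $r_2$-type counts, which I would handle directly as a double sum over $m,n\ge1$ of $(-1)^{m+n}q^{m^2+n^2}$ plus the diagonal.

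Then apply $U_2$. For $f_1$ I would use the pentagonal series \eqref{DSE19}: the exponent $k(3k-1)/2$ is even exactly when $k\equiv 0,3\pmod 4$. For the theta parts I would use the $2$-dissection $\phi(-q)=\phi(q^4)-2q\psi(q^8)$, i.e.\ $X(q)=X(q^4)-q\psi(q^8)$, together with \eqref{DSE16}. The expectation is that, after collecting terms, all contributions cancel modulo $4$ except twice a single theta series. Concretely, the target is
\begin{equation*}
\sum_{n\ge0} DSOME(2n)q^n\equiv 2\sum_{k\in\mathbb Z}\left(q^{48k^2+14k+1}+q^{48k^2+46k+11}\right)\pmod 4 .
\end{equation*}
This would parallel the proof of Theorem \ref{DSM2THM}, where the $q\frac{d}{dq}f_1$ trick collapsed everything to pentagonal exponents. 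As a check, these exponents should be $k'(3k'-1)/4$-type values for $k'$ in suitable residue classes modulo $8$, i.e.\ a subseries of $f_1$ evaluated at half-exponents. I would verify this form against the first few values of $DSOME(2n)$ computed from \eqref{bg1}.

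The main obstacle is the last step: showing that the even part of $-f_1(X+X^2)$, together with the doubled terms, collapses modulo $4$ to twice a single sparse theta series. My first attempt would be to write $X+X^2=(\phi(-q)^2-1)/4$ and use $f_1\phi(-q)^2=f_1^5/f_2^2$, a known weight-$2$ eta quotient with a clean $2$-dissection. The leftover term $-f_1/4$ would be combined with the $\phi$-part before dissecting, so that the $1/4$ cancels exactly. If that fails, I would fall back on the modulo-$4$ identity $f_1^4\equiv f_2^2\pmod 4$ to rewrite $f_1^5/f_2^2\equiv f_1\pmod 4$ after suitable manipulation, and reduce to Jacobi's identity for $f_1^3$ combined with $\psi$-dissections.
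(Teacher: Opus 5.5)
Your preliminary reductions are correct. The exact identity $\sum DSOME(n)q^n=-\frac{f_2}{f_1}(X+3X^2+4X^3+2X^4)$ and the congruence $\sum DSOME(n)q^n\equiv -f_1(X+X^2+2X^3+2X^4)\pmod 4$ are equivalent to the paper's \eqref{DSE6}. But the argument stops exactly where the content of the theorem lies. That the even part of this series is $\equiv 2\sum_k\bigl(q^{48k^2+14k+1}+q^{48k^2+46k+11}\bigr)\pmod 4$ is only stated as an ``expectation'' with a menu of possible attacks, so this is a plan rather than a proof. Two ideas are missing.

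First, the doubled terms are not independent of the main term: $X+X^2+2X^3+2X^4=X(1+X)(1+2X^2)\equiv X(1+X)\,\phi(-q^2)\pmod 4$. So by \eqref{DSE16} everything reduces to $U_2[f_1X(1+X)]$ modulo $4$, together with the fact that it vanishes modulo $2$, which lets you drop the factor $\phi(-q)\equiv 1\pmod 2$.

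Second, you need a usable closed form for $f_1X(1+X)$ modulo $4$; the double-sum/$r_2$ description does not give one. The paper gets it from $f_1^4=f_2^2\prod_{m\ge1}\bigl(1-4q^m/(1+q^m)^2\bigr)$, which yields $X(1+X)\equiv-\sum_{n\ge1}\sigma(n)q^n\pmod 4$, i.e.\ $f_1X(1+X)\equiv q\frac{d}{dq}f_1\pmod 4$. Hence $\sum DSOME(n)q^n\equiv-\phi(-q^2)\,q\frac{d}{dq}f_1\pmod 4$. Then \eqref{DSE17} produces the factor $2$ automatically, and only the parity of $k(3k-1)/4$ matters.

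Your own first attempt can in fact be completed, but that computation is the proof and you did not carry it out. By $f_1^5/f_2^2=\sum_{j}(1-6j)q^{j(3j-1)/2}$ (the paper's \eqref{D8M12}; a weight $3/2$ unary theta series, not weight $2$), one has exactly $f_1X(1+X)=\frac14\bigl(f_1^5/f_2^2-f_1\bigr)=\sum_j\frac{1-6j-(-1)^j}{4}q^{j(3j-1)/2}$.
\begin{itemize}
\item The coefficient is odd exactly when $j\equiv1,2\pmod 4$, i.e.\ on odd exponents. So $2X^2\cdot f_1X(1+X)$ contributes nothing to even exponents modulo $4$.
\item On the even exponents, $j=4i$ and $j=4i+3$, the coefficients are $-6i$ and $-6i-4$, both $\equiv 2i\pmod 4$.
\item Taking $i=2k+1$ gives precisely the exponents $48k^2+46k+11$ and $48k^2+82k+35=48(-k-1)^2+14(-k-1)+1$, as required.
\end{itemize}

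Your fallback cannot work. The congruence $f_1^4\equiv f_2^2\pmod 4$ (equivalently $f_1^5/f_2^2\equiv f_1\pmod 4$) says nothing about $\frac14(f_1^5/f_2^2-f_1)$ modulo $4$; for that you need $f_1^4/f_2^2$ modulo $16$.

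Your translation of the $2$-dissection is also wrong. From $\phi(-q)=\phi(q^4)-2q\psi(q^8)$ you get $X(q)=\sum_{k\ge1}q^{4k^2}-q\psi(q^8)$, not $X(q^4)-q\psi(q^8)$. The two differ by $2\sum_{k\ \mathrm{odd}}q^{4k^2}$, which is not negligible modulo $4$ inside $X+X^2$.
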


\begin{proof}
Employing \eqref{DSE3} in \eqref{SDI10},  we obtain
\begin{equation}\label{DSE2}
\sum_{n=0}^{\infty}DSOME(n)q^n
=
\dfrac{f_1}{8}
\left(
\phi(-q)^{-1}-\phi(-q)^3
\right).
\end{equation}
From \eqref{xq}, we note that
\begin{align}
\label{DSE4}(1+2X(q))^{-1}
&\equiv
1-2X(q)+4X(q)^2-8X(q)^3+16X(q)^4
\pmod{32},
\\
\label{DSE4a}(1+2X(q))^3
&=
1+6X(q)+12X(q)^2+8X(q)^3,
\end{align}
 and
\begin{equation}\label{DSE5}
X(q)^2=
\sum_{m=1}^{\infty}q^{2m^2}
+
2\sum_{1\leq m<\ell}
(-1)^{m+\ell}q^{m^2+\ell^2}
\equiv
\sum_{m=1}^{\infty}q^{2m^2}
\equiv
\sum_{m=1}^{\infty}(-1)^m q^{2m^2}
=X(q^2)
\pmod2.
\end{equation}
Using  \eqref{DSE3}, \eqref{DSE4}, \eqref{DSE4a} and \eqref{DSE5} in \eqref{DSE2}, we obtain
\begin{align}
\sum_{n=0}^{\infty}DSOME(n)q^n
&\label{DSMN1}=
\dfrac{f_1}{8}
\left(
(1+2X(q))^{-1}-(1+2X(q))^3
\right)\\
&\equiv
f_1\left(-X(q)-X(q)^2-2X(q)^3+2X(q)^4\right)
\pmod4
\notag\\
&\label{DSE6}\equiv
-f_1X(q)(1+X(q))(1-2X(q)^2)
\pmod4.
\end{align}
From \eqref{DSE3} and \eqref{DSE5}, we obtain
\begin{align}
X(q)(1+X(q))
&\label{DSMN2}=
\dfrac{\phi(-q)^2-1}{4}\\\intertext{and}
1-2X(q)^2
&\label{DSE7}\equiv
1-2X(q^2)
\equiv
1+2X(q^2)
\equiv
\phi(-q^2)
\pmod4.
\end{align}
Employing \eqref{DSE3}, \eqref{DSMN2}, and \eqref{DSE7} in \eqref{DSE6}, we obtain
\begin{align}
\sum_{n=0}^{\infty}DSOME(n)q^n
&\equiv
\dfrac{f_1}{4}
\left(1-\phi(-q)^2\right)
\phi(-q^2)
\pmod4
\notag\\
&\equiv
\dfrac{f_1}{4}
\left(
1-\dfrac{f_1^4}{f_2^2}
\right)
\dfrac{f_2^2}{f_4}\pmod4
\notag\\
&\label{DSE8}\equiv
\dfrac{f_1(f_2^2-f_1^4)}{4f_4}
\pmod4.
\end{align}
Now, for every positive integer $m$, we note that
\begin{equation}\label{DSE9}
(1-q^m)^4=
(1-q^{2m})^2-4q^m(1-q^m)^2
=
(1-q^{2m})^2
\left(
1-\dfrac{4q^m}{(1+q^m)^2}
\right).
\end{equation}
Therefore, \eqref{DSE9} implies
\begin{align}
f_1^4
&=
\prod_{m=1}^{\infty}(1-q^m)^4
\notag\\
&=
\prod_{m=1}^{\infty}(1-q^{2m})^2
\prod_{m=1}^{\infty}
\left(
1-\dfrac{4q^m}{(1+q^m)^2}
\right)
\notag\\
&=
f_2^2
\prod_{m=1}^{\infty}
\left(
1-\dfrac{4q^m}{(1+q^m)^2}
\right)
\notag\\
&\label{DSE10}\equiv
f_2^2
\left(
1-4\sum_{m=1}^{\infty}
\dfrac{q^m}{(1+q^m)^2}
\right)
\pmod{16}
\end{align}
and 
\begin{equation}\label{DSE11}
\dfrac{f_2^2-f_1^4}{4}
\equiv
f_2^2
\sum_{m=1}^{\infty}
\dfrac{q^m}{(1+q^m)^2}
\pmod4.
\end{equation}
Employing \eqref{DSMN3}, \eqref{DSE13} and \eqref{DSE11}
 in \eqref{DSE8}, we obtain
\begin{equation}\label{DSE15}
\sum_{n=0}^{\infty}DSOME(n)q^n
\equiv
\dfrac{f_1f_2^2}{f_4}
\sum_{n=1}^{\infty}\sigma(n)q^n
\equiv
-\dfrac{f_2^2}{f_4}q\dfrac{d}{dq} f_1\pmod4
\equiv
-\phi(-q^2)q\dfrac{d}{dq} f_1
\pmod4.
\end{equation}
Employing \eqref{DSE16}, \eqref{DSE17} and \eqref{DSE3} 
 in \eqref{DSE15}, we obtain
\begin{align}
\sum_{n=0}^{\infty}DSOME(2n)q^n
&\equiv
-U_2\left[
\phi(-q^2)q\dfrac{d}{dq}f_1
\right]
\pmod4
\notag\\
&\equiv
-\phi(-q)U_2\left[q\dfrac{d}{dq} f_1\right]\pmod4
\notag\\
&\equiv
-2\phi(-q)q\dfrac{d}{dq} U_2\left[f_1\right]\pmod4
\notag\\
&\equiv
-2(1+2X(q))q\dfrac{d}{dq} U_2\left[f_1\right]\pmod4
\notag\\
&\label{DSE18}\equiv
2q\dfrac{d}{dq}U_2\left[f_1\right]
\pmod4.
\end{align}
Employing \eqref{DSE19}, we obtain
\begin{align}
U_2\left[f_1\right]
&=
\sum_{\substack{k\in\mathbb Z\\4\mid k(3k-1)}}
(-1)^kq^{k(3k-1)/4},
\notag\\\intertext{and}
2q\dfrac{d}{dq} U_2\left[f_1\right]
&\label{DSE20}=
2\sum_{\substack{k\in\mathbb Z\\4\mid k(3k-1)}}
\dfrac{k(3k-1)}{4}
(-1)^kq^{k(3k-1)/4}.
\end{align}
Now the term on the right-hand side of \eqref{DSE20} is nonzero modulo $4$
if and only if 
$k(3k-1)\equiv4\pmod8.$
Also, 
$$\begin{array}{c|cccccccc}
k\pmod8&0&1&2&3&4&5&6&7\\ \hline
k(3k-1)\pmod8&0&2&2&0&4&6&6&4
\end{array}.$$
So  $k(3k-1)\equiv4\pmod8$ if  $k\equiv4,7\pmod8$, and this implies that
\begin{align}\label{er}
\dfrac{(8j+4)(3(8j+4)-1)}{4}
&=
48j^2+46j+11\\\intertext{and}
\dfrac{(8j+7)(3(8j+7)-1)}{4}
&\label{DSE24}=
48j^2+82j+35
=
48(-j-1)^2+14(-j-1)+1.
\end{align}
Hence, $2(-1)^k\dfrac{k(3k-1)}{4}\equiv 2\pmod 4$ for $k\equiv4,7\pmod8$.
Using \eqref{er} and \eqref{DSE24} in \eqref{DSE18} and  then replacing $-j-1$
by $j$ in \eqref{DSE24}, we obtain
\begin{align}
\sum_{n=0}^{\infty}DSOME(2n)q^n
&\label{DSE25}\equiv
2\sum_{j\in\mathbb Z}
\left(
q^{48j^2+14j+1}
+
q^{48j^2+46j+11}
\right)
\pmod4.
\end{align}
Note that for any $k, l\in\mathbb Z$ and $k\ne l$, $48k^2+14k+1\ne 48l^2+14l+1$ and $48k^2+46k+11\ne 48l^2+46l+11$.  Also,   $48k^2+14k+1\ne 48l^2+46l+11$ for all $k,l\in\mathbb Z$.  Equating  the coefficients of $q^n$ in  \eqref{DSE25}, we complete the proof. 
\end{proof}

\begin{corollary}\label{cor:DSOME-complete-family}
Let $M$ and $r$ be integers such that $M\geq 1$ and
$0\leq r<M$. Define
\begin{equation*}
\begin{aligned}
\mathcal{Q}_{M}
=
\{48x^{2}+14x+1\pmod{M}:x\in\mathbb{Z}\}\cup
\{48x^{2}+46x+11\pmod{M}:x\in\mathbb{Z}\}.
\end{aligned}
\end{equation*}
Then, we have
\begin{enumerate} \item[$(i)$] $DSOME(2Mn+2r)\equiv0\pmod{4} \text{  holds for all integers $n\ge0$ if and only if } r\notin\mathcal{Q}_{M}.$
    \item[$(ii)$]If $\gcd(M,6)=1$, then 
$
DSOME(2Mn+2r)\equiv0\pmod{4}$
\text{  holds for all integers $n\ge0$} \text{ if and only if } 
$48r+1$ is not congruent modulo $M$ to a perfect square.

     \item[$(iii)$]If $p\geq5$ is a prime,  then 
$$
DSOME(2pn+2r)\equiv0\pmod{4}
\text{  holds for all integers $n\ge0$ if and only if }
\left(\dfrac{48r+1}{p}\right)=-1,
$$
where, here and throughout the paper, $\left(\dfrac{\cdot}{p}\right)$ denotes the Legendre symbol.
\end{enumerate}
\end{corollary}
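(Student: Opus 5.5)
The plan is to read everything off Theorem \ref{T1}. Its statement says that $DSOME(2n)\not\equiv 0\pmod 4$ exactly when $n$ lies in
$$S=\{48k^2+14k+1:k\in\mathbb Z\}\cup\{48k^2+46k+11:k\in\mathbb Z\}.$$
So $DSOME(2Mn+2r)\equiv0\pmod4$ for all $n\ge0$ holds iff no nonnegative integer $Mn+r$ lies in $S$. Since every element of $S$ is a nonnegative integer, this is the same as saying $S$ contains no element $\equiv r\pmod M$, i.e.\ $r\notin\mathcal{Q}_M$. For the converse direction, if some $s\in S$ has $s\equiv r\pmod M$, then $s\ge r$ because $0\le r<M$ and $s\ge0$. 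Hence $s=Mn+r$ with $n\ge0$, and $DSOME(2s)\equiv2\pmod4$. This proves part $(i)$.

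For part $(ii)$ I will complete the square. One computes
$$48(48x^2+14x+1)+1=(48x+7)^2,\qquad 48(48x^2+46x+11)+1=(48x+23)^2.$$
Thus $r\in\mathcal{Q}_M$ iff $48r+1\equiv y^2\pmod M$ for some integer $y\equiv7$ or $23\pmod{48}$. The forward direction of $(ii)$ is then immediate, because $48$ is invertible mod $M$ and $r\mapsto 48r+1$ is injective mod $M$.

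The converse is the main obstacle. Suppose $48r+1\equiv y^2\pmod M$ for some integer $y$. By the Chinese remainder theorem, using $\gcd(M,48)=1$ (which follows from $\gcd(M,6)=1$), I can choose $y'\equiv y\pmod M$ with $y'\equiv7\pmod{48}$. Write $y'=48x+7$. Then $48(48x^2+14x+1)+1=y'^2\equiv 48r+1\pmod M$, so $48x^2+14x+1\equiv r\pmod M$ after cancelling the invertible $48$. Hence $r\in\mathcal{Q}_M$, and $(ii)$ follows by combining with $(i)$. (Only the first family is needed here; the second merely gives the residues $y\equiv23$.)

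Part $(iii)$ is the special case $M=p$ of $(ii)$, since $\gcd(p,6)=1$ for a prime $p\ge5$. In that case, $48r+1$ is not congruent to a square mod $p$ exactly when $\left(\frac{48r+1}{p}\right)=-1$. The value $0$ is excluded from this, since $0$ is itself a square; this matches the Legendre symbol convention, where $\left(\frac{0}{p}\right)=0$ and not $-1$.
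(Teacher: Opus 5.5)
Your proof is correct and follows the paper's argument essentially step for step. Part $(i)$ is read off directly from Theorem~\ref{T1}; part $(ii)$ uses the same identity $48(48x^2+14x+1)+1=(48x+7)^2$, with the invertibility of $48$ modulo $M$ used to write any square root $y$ of $48r+1$ as $48x+7$ modulo $M$; and part $(iii)$ is the case $M=p$, using that $0$ is a square, with your explicit check that $s\ge r$ (so the index $n$ is nonnegative) in $(i)$ being a small detail the paper only asserts.
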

\begin{proof}(i)~
Suppose that $r\notin\mathcal{Q}_{M}$ and for some integer
$n\geq0$,
\begin{equation*}
DSOME(2Mn+2r)\not\equiv0\pmod{4}.
\end{equation*}
Then by  Theorem \ref{T1}, for some integer $x$, either
$
Mn+r=48x^{2}+14x+1$
or 
$Mn+r=48x^{2}+46x+11,
$
which implies $r\in\mathcal{Q}_{M}$, a contradiction.

Conversely, suppose that
\begin{equation*}
DSOME(2Mn+2r)\equiv0\pmod{4}
\end{equation*}
holds for every integer $n\geq0$ and 
$r\in\mathcal{Q}_{M}$. Then, for some integer $x$, either
$
A(x)\equiv r\pmod{M}
$ or
$B(x)\equiv r\pmod{M},
$
where $A(x)$ and $B(x)$ are positive integers in $\mathcal{Q}_{M}$ such that
$A(x)=MN+r$ or $B(x)=MN'+r$, for some
nonnegative integer $N$ and $N'$. Therefore,  
Theorem \ref{T1} implies
$$
DSOME(2MN+2r)\equiv2\pmod{4}, \text{ or } DSOME(2MN'+2r)\equiv2\pmod{4},
$$
which is a contradiction. So the proof is complete.

(ii)~Let $\gcd(M,6)=1$. If $r\in\mathcal{Q}_{M}$, then clearly $48r+1$ is congruent modulo
$M$ to a perfect square.  

Conversely, suppose that for some integer $y$, 
$
y^{2}\equiv48r+1\pmod{M}.
$
Since $\gcd(M,6)=1$ implies $\gcd(M,48)=1$, 48 is invertible modulo $M$. So there exists an  integer  $x$ such
that 
$
48x+7\equiv y\pmod{M}.
$
Therefore, 
$$48\left(48x^{2}+14x+1\right)+1
=(48x+7)^{2}
\equiv y^{2}
\equiv48r+1
\pmod{M},$$
which implies  $r\in\mathcal{Q}_{M}$. This completes the proof of 
(ii).

(iii)~Let $p\geq5$ be a prime. Since $p\neq2,3$,  we have $\gcd(p,6)=1.$
Therefore, by (ii)
\begin{equation}\label{DSQ2}
r\in \mathcal{Q}_{p}
\quad\text{if and only if}\quad
48r+1 \text{ is a square modulo }p.
\end{equation}
By the definition of the Legendre symbol, we have
\begin{equation}\label{DSQ3}
\left(\dfrac{48r+1}{p}\right)
=
\begin{cases}
1, & 48r+1\not\equiv0\pmod p
\text{ and }48r+1\text{ is a perfect square modulo }p,\\[2mm]
0, & 48r+1\equiv0\pmod p,\\[2mm]
-1, & 48r+1\text{ is a quadratic nonresidue modulo }p.
\end{cases}
\end{equation}
Since $0$ is also a square modulo $p$, from \eqref{DSQ2} and
\eqref{DSQ3} it follows that 
$$r\notin \mathcal Q_p\text{ if and only if }
\left(\dfrac{48r+1}{p}\right)=-1.$$
Hence, the proof of (iii) is complete.
\end{proof}
\begin{remark}\label{rem:DSOME-examples}
From Corollary~\ref{cor:DSOME-complete-family}, one can easily obtain  congruences  modulo 4 for
$DSOME(n)$ by considering different values of $M$. For example, setting $M=2$, $5$, $7$ and $11$, we have the following congruences, respectively:
\begin{align*}
DSOME(4n)
&\equiv0\pmod{4},\\
DSOME(10n+t)
&\equiv0\pmod{4}; \quad t=4, 8,\\
DSOME(14n+t)
&\equiv0\pmod{4}; \quad t=4,6, 10,\\\intertext{and}
DSOME(22n+t)&\equiv0\pmod{4};\quad t=6, 8, 10, 14, 20.
\end{align*}
\end{remark}

\begin{corollary}\label{cor:DSOME-odd-valuation}
Let $p\geq5$ be a prime and  $s$ be any nonnegative integer. If $B$ is a positive integer such that $p\nmid B$ and $p^{2s+1}B\equiv1\pmod{48}$, then
\begin{equation*}
DSOME
\left(
\frac{p^{2s+1}B-1}{24}
\right)
\equiv0\pmod{4}.
\end{equation*}
\end{corollary}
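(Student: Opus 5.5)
The plan is to reduce the statement to Theorem~\ref{T1} by turning the argument into an even number $2n$ and then showing that $48n+1$ cannot be a perfect square.

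First I would set $N=\frac{p^{2s+1}B-1}{24}$ and check that $N$ is an even nonnegative integer. Since $p^{2s+1}B\equiv1\pmod{48}$, we have $24N=p^{2s+1}B-1\equiv0\pmod{48}$, so $N$ is even. Write $N=2n$ with $n\ge0$; then
$$48n+1=p^{2s+1}B.$$

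Next I would restate the exceptional set of Theorem~\ref{T1} as a condition on $48n+1$. Theorem~\ref{T1} says $DSOME(2n)\not\equiv0\pmod4$ exactly when $n=48k^2+14k+1$ or $n=48k^2+46k+11$ for some $k\in\mathbb Z$. Completing the square gives
\begin{align*}
48(48k^2+14k+1)+1&=(48k+7)^2,\\
48(48k^2+46k+11)+1&=(48k+23)^2.
\end{align*}
Hence, if $DSOME(2n)\not\equiv0\pmod4$, then $48n+1$ is a perfect square. This is the same completing-the-square identity used in Corollary~\ref{cor:DSOME-complete-family}(ii).

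Finally, $48n+1=p^{2s+1}B$ with $p\nmid B$, so the exponent of $p$ in $48n+1$ is exactly $2s+1$. An odd exponent means $48n+1$ is not a perfect square. By the contrapositive of the previous step, $DSOME(2n)=DSOME\left(\frac{p^{2s+1}B-1}{24}\right)\equiv0\pmod4$.

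I expect no real obstacle here; the only care needed is confirming divisibility by $48$ rather than $24$, so that the argument is even and Theorem~\ref{T1} applies. The same argument, applied with Theorem~\ref{DSM2THM} and $24n+1=(12j+5)^2$ or $(12j+11)^2$, gives Corollary~\ref{cio}.
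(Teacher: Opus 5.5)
Your proof is correct and is essentially the paper's argument. The paper sets $N=(p^{2s+1}B-1)/48$, so that $48N+1=p^{2s+1}B$, and applies Theorem~\ref{T1} to $DSOME(2N)$; it leaves implicit the completing-the-square identities giving $(48k+7)^2$ and $(48k+23)^2$, which you verify explicitly. The only slip is in your closing aside and does not affect this corollary: for Corollary~\ref{cio} the relevant squares are $24n+1=(24j+5)^2$ and $(24j+11)^2$, not $(12j+5)^2$ and $(12j+11)^2$.
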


\begin{proof}
Set
\begin{equation*}
N=\frac{p^{2s+1}B-1}{48}.
\end{equation*}
Then $N$ is a nonnegative integer and
$
48N+1=p^{2s+1}B.
$
Now if  $DSOME(2N)\not\equiv0\pmod{4}$ then 
Theorem  \ref{T1} implies that $48N+1$ is a perfect square, 
which is impossible as the exponent of $p$ in
$p^{2s+1}B$ is odd. Thus,
$DSOME(2N)\equiv0\pmod{4}$. 
\end{proof}

\begin{theorem}\label{DSGTHM}
For $k\in\mathbb Z$, let
\begin{equation*}
P_k=\dfrac{k(3k-1)}{2} 
\end{equation*}
 and for any integer $n\ge 0$, define
\begin{align*}
\rho(n)
&=
\#\left\{
(a,j)\in\mathbb Z_{>0}\times\mathbb Z:
n=2a^2+24j^2+10j+1
\right\}
\notag\\
&\quad+
\#\left\{
(a,j)\in\mathbb Z_{>0}\times\mathbb Z:
n=2a^2+24j^2+22j+5
\right\},
\end{align*}
where $\#S$ denotes the number of elements of the finite set $S$.
Then
$$DSOME(n)
\equiv
\begin{cases}
2\rho(n)-(-1)^kn\pmod4,
& n=P_k\text{ for some unique }k\in\mathbb Z,\\[2mm]
2\rho(n) \pmod4,
& n\neq P_k\text{ for every }k\in\mathbb Z.
\end{cases}$$
\end{theorem}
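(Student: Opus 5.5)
The plan is to start from the mod $4$ generating function already obtained in the proof of Theorem~\ref{T1}, namely \eqref{DSE15}:
$$\sum_{n=0}^{\infty}DSOME(n)q^n\equiv -\phi(-q^2)\,q\dfrac{d}{dq}f_1\pmod 4 .$$
By \eqref{DSE3} with $q$ replaced by $q^2$, we have $\phi(-q^2)=1+2X(q^2)$. This splits the right-hand side into two pieces:
$$-q\dfrac{d}{dq}f_1\;-\;2X(q^2)\,q\dfrac{d}{dq}f_1 .$$
I will read off the coefficient of $q^n$ in each piece separately.

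For the first piece, \eqref{DSM2M6} gives
$$-q\dfrac{d}{dq}f_1=-\sum_{k\in\mathbb Z}(-1)^kP_k\,q^{P_k}.$$
The generalized pentagonal numbers $P_k$ are pairwise distinct for distinct $k\in\mathbb Z$. Indeed, $P_k=P_l$ with $k\ne l$ would force $3(k+l)=1$, which is impossible. Hence the coefficient of $q^n$ in this piece is $-(-1)^kn$ when $n=P_k$, with $k$ unique, and $0$ otherwise. This accounts for the extra term in the first case of the statement.

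For the second piece, it suffices to know both factors modulo $2$, because of the factor $2$ in front. Modulo $2$ the signs in $X(q^2)=\sum_{a\ge1}(-1)^aq^{2a^2}$ disappear, so
$$X(q^2)\equiv\sum_{a\ge1}q^{2a^2}\pmod 2 .$$
By \eqref{DSM2M7}--\eqref{DSM2M15} in the proof of Theorem~\ref{DSM2THM},
$$q\dfrac{d}{dq}f_1\equiv\sum_{j\in\mathbb Z}\left(q^{24j^2+10j+1}+q^{24j^2+22j+5}\right)\pmod 2 .$$
Multiplying these two congruences, the coefficient of $q^n$ in the product is exactly the number of pairs $(a,j)$ with $a\ge1$ counted in $\rho(n)$. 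Therefore
$$-2X(q^2)\,q\dfrac{d}{dq}f_1\equiv -2\sum_{n}\rho(n)q^n\equiv 2\sum_n\rho(n)q^n\pmod 4 .$$
For this counting to be exact, the exponents $24j^2+10j+1$ and $24j^2+22j+5$ must be distinct as $j$ ranges over $\mathbb Z$. This holds because they are the values $P_k$ for $k\equiv1\pmod4$ and $k\equiv2\pmod4$ respectively, and the $P_k$ are distinct.

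Adding the two contributions gives the stated formula in both cases. The only delicate point is justifying the reduction of the second piece modulo $2$ inside a coefficient multiplied by $2$. This is the elementary observation that $2A\equiv 2B\pmod4$ whenever $A\equiv B\pmod2$. I do not expect any real obstacle beyond carefully tracking the sign $-(-1)^k$ and the uniqueness of $k$.
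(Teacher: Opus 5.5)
Your proposal is correct and follows the paper's proof essentially step for step: both start from \eqref{DSE15}, isolate the $a=0$ term of $\phi(-q^2)$ to get $-\sum_k(-1)^kP_kq^{P_k}$, and reduce the remaining doubled term modulo $2$ via the parity of $P_k$. The paper pairs $a$ with $-a$ in $\sum_{a\in\mathbb Z}(-1)^aq^{2a^2}$ while you write $\phi(-q^2)=1+2X(q^2)$, which amounts to the same thing, and your distinctness check on the exponents $24j^2+10j+1$, $24j^2+22j+5$ is harmless but unnecessary, since $\rho(n)$ counts pairs $(a,j)$ with multiplicity exactly as the coefficient of the product does.
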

\begin{proof}
From \eqref{DSE3}, we have
\begin{equation}\label{DSG4}
\phi(-q^2)
=
1+2\sum_{a=1}^{\infty}(-1)^a q^{2a^2}
=
\sum_{a\in\mathbb Z}(-1)^a q^{2a^2}.
\end{equation}
From \eqref{DSE19}, we obtain
\begin{align}
q\dfrac{d}{dq}f_1
=
q\dfrac{d}{dq}
\left(
\sum_{k\in\mathbb Z}(-1)^kq^{P_k}
\right)
=
\sum_{k\in\mathbb Z}
(-1)^kP_kq^{P_k}.
\label{DSG5}
\end{align}
Employing  \eqref{DSG4} and \eqref{DSG5} in \eqref{DSE15}, we obtain
\begin{align}
\sum_{n=0}^{\infty}DSOME(n)q^n
&\equiv
-\left(
\sum_{a\in\mathbb Z}(-1)^a q^{2a^2}
\right)
\left(
\sum_{k\in\mathbb Z}(-1)^kP_kq^{P_k}
\right)
\pmod 4
\notag\\
&=
-\sum_{a\in\mathbb Z}
\sum_{k\in\mathbb Z}
(-1)^{a+k}P_kq^{2a^2+P_k}
\notag\\
&=
-\sum_{k\in\mathbb Z}
(-1)^kP_kq^{P_k}-
\sum_{a=1}^{\infty}
\sum_{k\in\mathbb Z}
\left(
(-1)^{a+k}+(-1)^{-a+k}
\right)
P_kq^{2a^2+P_k}
\notag\\
&\label{DSG6}=
-\sum_{k\in\mathbb Z}
(-1)^kP_kq^{P_k}-
2\sum_{a=1}^{\infty}
\sum_{k\in\mathbb Z}
(-1)^{a+k}P_kq^{2a^2+P_k}
\pmod 4.
\end{align}
For $a\geq1$ and $k\in\mathbb Z$, we note that
\begin{equation}\label{DSG7}
-2(-1)^{a+k}P_k
\equiv
\begin{cases}
0\pmod 4,
& P_k\text{ is even},\\
2\pmod 4,
& P_k\text{ is odd}.
\end{cases}
\end{equation}
Also, 
\begin{equation}\label{DSG10}
P_k\text{ is odd}
\text{ if and only if }
k\equiv1,2\pmod 4.
\end{equation}
Employing \eqref{DSG7} and \eqref{DSG10} in \eqref{DSG6}, we obtain
\begin{align}
\sum_{n=0}^{\infty}DSOME(n)q^n
&\equiv
-\sum_{k\in\mathbb Z}
(-1)^kP_kq^{P_k}
+
2\sum_{a=1}^{\infty}
\sum_{j\in\mathbb Z}
q^{2a^2+24j^2+10j+1}
\notag\\
&\label{DSG12}\quad+
2\sum_{a=1}^{\infty}
\sum_{j\in\mathbb Z}
q^{2a^2+24j^2+22j+5}
\pmod 4.
\end{align}
Now, comparing the coefficients of like powers of $q$ in \eqref{DSG12}, we arrive at the  desired result. 
\end{proof}
\begin{theorem}\label{DSM8THM}
 For any positive integer $n$, we have
\begin{equation*}
DSOME(n)
\equiv
3\sum_{r=0}^{n-1}
p_{\mathrm d}(r)
\left(
\sigma_3(n-r)+2\sigma(n-r)
\right)
\pmod 8.
\end{equation*}
\end{theorem}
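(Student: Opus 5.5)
The plan is to start from the Baruah--Gogoi form \eqref{SDI10},
$$\sum_{n=0}^{\infty}DSOME(n)q^n=\dfrac{1}{8}\,\dfrac{f_2}{f_1}\left(1-\dfrac{f_1^8}{f_2^4}\right),$$
and to compute $1-f_1^8/f_2^4$ modulo $64$. After dividing by $8$, this gives the generating function modulo $8$ as $f_2/f_1=\sum p_d(r)q^r$ times a divisor-sum series.

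The key algebraic identity is
$$(1+x)^4-(1-x)^4=8x(1+x^2).$$
Since $f_1^8/f_2^4=\prod_{m\ge1}\left(\frac{1-q^m}{1+q^m}\right)^4$, this identity gives
$$\dfrac{f_1^8}{f_2^4}=\prod_{m=1}^{\infty}\left(1-8y_m\right),\qquad y_m:=\dfrac{q^m(1+q^{2m})}{(1+q^m)^4}.$$
Each $y_m$ is a power series with integer coefficients, starting at $q^m$. Expanding the product, every term involving two or more distinct $y_m$ carries a factor $64$. Hence
$$\dfrac{f_1^8}{f_2^4}\equiv 1-8\sum_{m\ge1}y_m \pmod{64}.$$
This is a congruence between integer power series, so it can legitimately be divided by $8$. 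We obtain
$$\sum_{n=0}^{\infty}DSOME(n)q^n\equiv\dfrac{f_2}{f_1}\sum_{m\ge1}y_m \pmod 8.$$

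Next, apply \eqref{DSM8M17} with $x=q^m$ and sum over $m$. This gives
$$\sum_{m\ge1}y_m=\sum_{n\ge1}c(n)q^n,\qquad c(n)=\sum_{j\mid n}(-1)^{j-1}\dfrac{j(j^2+2)}{3}\in\mathbb Z.$$
By Lemma \ref{lem24} with $s=3$ and $s=1$,
$$3c(n)=\sigma_3(n)-16\sigma_3(n/2)+2\sigma(n)-8\sigma(n/2)\equiv\sigma_3(n)+2\sigma(n)\pmod 8.$$
Since $3\cdot3\equiv1\pmod 8$, this gives $c(n)\equiv3(\sigma_3(n)+2\sigma(n))\pmod 8$.

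Finally, multiply by $f_2/f_1=\sum_r p_d(r)q^r$ from \eqref{DSM8M25} and compare coefficients of $q^n$. The sum runs over $r\le n-1$ because $c(0)=0$, and this yields the stated congruence.

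The step needing the most care is the passage from modulo $64$ to modulo $8$. One must check two things: that the cross terms of $\prod(1-8y_m)$ are genuinely divisible by $64$ as integer series (the product converges formally since $y_m=O(q^m)$), and that $c(n)$ is an integer, so that inverting $3$ modulo $8$ is legitimate.
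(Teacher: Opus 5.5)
Your proof is correct and is essentially the paper's argument. The paper writes $(f_1^4/f_2^2)^2=\prod_m(1-4a_m)^2=\prod_m(1-8a_m+16a_m^2)$ with $a_m=q^m/(1+q^m)^2$, discards the cross terms modulo $64$, and then recombines $a_m-2a_m^2=q^m(1+q^{2m})/(1+q^m)^4$ in \eqref{DSM8M12}; your identity $(1+x)^4-(1-x)^4=8x(1+x^2)$ produces the same factor $1-8y_m=(1-4a_m)^2$ in one step, and the remaining steps (Lemma \ref{lem24} with $s=3,1$, inverting $3$ modulo $8$, multiplying by $f_2/f_1$) coincide with the paper's.
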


\begin{proof}
Using \eqref{DSE2} and \eqref{DSE3}, we obtain
\begin{align}
\sum_{n=0}^{\infty}DSOME(n)q^n
=
\dfrac{f_1}{8}
\left(
\phi(-q)^{-1}-\phi(-q)^3
\right)
=
\dfrac{f_2}{f_1}\,
\dfrac{1}{8}
\left(
1-
\left(
\dfrac{f_1^4}{f_2^2}
\right)^2
\right).
\label{DSM8M3}
\end{align}
For every positive integer $m$, let
\begin{equation}\label{DSM8M4}
a_m=\dfrac{q^m}{(1+q^m)^2}.
\end{equation}
Then
$$\dfrac{f_1^4}{f_2^2}=
\dfrac{\displaystyle\prod_{m=1}^{\infty}(1-q^m)^4}
     {\displaystyle\prod_{m=1}^{\infty}(1-q^{2m})^2}=
\prod_{m=1}^{\infty}
\left(
1-\dfrac{4q^m}{(1+q^m)^2}
\right)=
\prod_{m=1}^{\infty}(1-4a_m).$$
Therefore,
\begin{equation}\label{DSM8M6}
\left(
\dfrac{f_1^4}{f_2^2}
\right)^2=
\prod_{m=1}^{\infty}(1-4a_m)^2=
\prod_{m=1}^{\infty}
\left(
1-8a_m+16a_m^2
\right).
\end{equation}
Also, for any  integer $M\geq2$, 
\begin{equation}\label{DSM8M7}
\prod_{m=1}^{M}
\left(
1-8a_m+16a_m^2
\right)
=
1+
\sum_{m=1}^{M}
\left(
-8a_m+16a_m^2
\right)\quad+
\sum_{t=2}^{M}
\sum_{1\leq m_1<\cdots<m_t\leq M}
\prod_{i=1}^{t}
\left(
-8a_{m_i}+16a_{m_i}^2
\right),
\end{equation} and for any integer 
 $t\geq2$, 
\begin{equation}\label{DSM8M8}
\prod_{i=1}^{t}
\left(
-8a_{m_i}+16a_{m_i}^2
\right)=
\prod_{i=1}^{t}
8\left(
-a_{m_i}+2a_{m_i}^2
\right)
=
8^t
\prod_{i=1}^{t}
\left(
-a_{m_i}+2a_{m_i}^2
\right)
\equiv0\pmod{64}.
\end{equation}
So employing \eqref{DSM8M8} in \eqref{DSM8M7}, we obtain
\begin{equation}\label{DSM8M9}
\prod_{m=1}^{M}
\left(
1-8a_m+16a_m^2
\right)
\equiv
1+
\sum_{m=1}^{M}
\left(
-8a_m+16a_m^2
\right)
=
1-8\sum_{m=1}^{M}a_m
+16\sum_{m=1}^{M}a_m^2
\pmod{64},
\end{equation}
Now, taking limit as
$M\rightarrow\infty$ in \eqref{DSM8M9} and employing
\eqref{DSM8M6}, we obtain
\begin{equation*}
1-
\left(
\dfrac{f_1^4}{f_2^2}
\right)^2
\equiv
8\sum_{m=1}^{\infty}a_m
-16\sum_{m=1}^{\infty}a_m^2
\pmod{64},
\end{equation*}
which implies
\begin{equation}\label{DSM8M11}
\dfrac{1}{8}
\left(
1-
\left(
\dfrac{f_1^4}{f_2^2}
\right)^2
\right)
\equiv
\sum_{m=1}^{\infty}a_m
-
2\sum_{m=1}^{\infty}a_m^2
\pmod8.
\end{equation}
From  \eqref{DSM8M4}, we note that 
\begin{align}
a_m-2a_m^2
=
\dfrac{q^m}{(1+q^m)^2}
-
\dfrac{2q^{2m}}{(1+q^m)^4}
=
\dfrac{q^m(1+q^{2m})}{(1+q^m)^4}.
\label{DSM8M12}
\end{align}
Employing  \eqref{DSM8M11} and
\eqref{DSM8M12} in \eqref{DSM8M3}, we obtain
\begin{align}
\sum_{n=0}^{\infty}DSOME(n)q^n
&\equiv
\dfrac{f_2}{f_1}
\left(
\sum_{m=1}^{\infty}a_m
-
2\sum_{m=1}^{\infty}a_m^2
\right)
\equiv
\dfrac{f_2}{f_1}
\sum_{m=1}^{\infty}
\dfrac{q^m(1+q^{2m})}{(1+q^m)^4}
\pmod8.
\label{DSM8M13}
\end{align}
Substituting $x=q^m$ in \eqref{DSM8M17}, we obtain
\begin{align}
\sum_{m=1}^{\infty}
\dfrac{q^m(1+q^{2m})}{(1+q^m)^4}
&=
\dfrac{1}{3}
\sum_{m=1}^{\infty}
\sum_{j=1}^{\infty}
(-1)^{j-1}j(j^2+2)q^{mj}
\notag\\
&=
\dfrac{1}{3}
\sum_{N=1}^{\infty}
\left(
\sum_{j\mid N}
(-1)^{j-1}j(j^2+2)
\right)q^N
\notag\\
&\label{DSM8M19}=
\dfrac{1}{3}
\sum_{N=1}^{\infty}
\left(
\sum_{j\mid N}(-1)^{j-1}j^3
+
2\sum_{j\mid N}(-1)^{j-1}j
\right)q^N.
\end{align}
Taking $s=3$ and $s=1$ in \eqref{DSM8M20}, we obtain
\begin{align}
\sum_{j\mid N}(-1)^{j-1}j^3
=
\sigma_3(N)-16\sigma_3(N/2)\quad 
\text{and}\quad 
\sum_{j\mid N}(-1)^{j-1}j
=
\sigma(N)-4\sigma(N/2).
\label{DSM8M21}
\end{align}
Employing \eqref{DSM8M21} in \eqref{DSM8M19}, we obtain
\begin{align}
3
\left(
\sum_{m=1}^{\infty}
\dfrac{q^m(1+q^{2m})}{(1+q^m)^4}
\right)
&=
\sum_{N=1}^{\infty}\left(\sigma_3(N)-16\sigma_3(N/2)+
2\left(
\sigma(N)-4\sigma(N/2)
\right)\right)q^N
\notag\\
&\equiv
\sum_{N=1}^{\infty}\left(\sigma_3(N)+2\sigma(N)\right)q^N
\pmod8,\notag
\end{align}
which gives
\begin{equation}\label{DSM8M24}
\sum_{m=1}^{\infty}
\dfrac{q^m(1+q^{2m})}{(1+q^m)^4}
\equiv
3\sum_{N=1}^{\infty}
\left(
\sigma_3(N)+2\sigma(N)
\right)q^N
\pmod8.
\end{equation}
Employing  \eqref{DSM8M25} and \eqref{DSM8M24} in \eqref{DSM8M13}, we obtain
\begin{align}
\sum_{n=0}^{\infty}DSOME(n)q^n
&\equiv
3
\left(
\sum_{r=0}^{\infty}p_{\mathrm d}(r)q^r
\right)
\left(
\sum_{N=1}^{\infty}\sigma_3(N)q^N
\right)+
6
\left(
\sum_{r=0}^{\infty}p_{\mathrm d}(r)q^r
\right)
\left(
\sum_{N=1}^{\infty}\sigma(N)q^N
\right)
\pmod8
\notag\\
&\equiv
3\sum_{n=1}^{\infty}
\left(
\sum_{r=0}^{n-1}
p_{\mathrm d}(r)\sigma_3(n-r)
\right)q^n+
6\sum_{n=1}^{\infty}
\left(
\sum_{r=0}^{n-1}
p_{\mathrm d}(r)\sigma(n-r)
\right)q^n
\pmod8
\notag\\
&\label{DSM8M26}\equiv
3\sum_{n=1}^{\infty}
\left(
\sum_{r=0}^{n-1}
p_{\mathrm d}(r)
\left(
\sigma_3(n-r)+2\sigma(n-r)
\right)
\right)q^n
\pmod8.
\end{align}
Comparing the coefficients of $q^n$ in \eqref{DSM8M26}, we complete the proof.
\end{proof}

\begin{theorem}\label{nd1}
For every nonnegative integer $n$ and $1\leq t \leq 4$, we have
$$DSOME(50n+10t+1)\equiv 0\pmod{8}.$$
\end{theorem}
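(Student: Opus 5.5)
The plan is to start from the exact representation \eqref{DSMN1},
$$\sum_{n\ge0}DSOME(n)q^n=\dfrac{f_1}{8}\left((1+2X(q))^{-1}-(1+2X(q))^3\right),$$
and reduce it modulo $8$ to a finite combination of theta products. I expand $(1+2X)^{-1}$ modulo $64$ and subtract $(1+2X)^3$; after division by $8$ this gives
$$\sum_{n\ge0}DSOME(n)q^n\equiv -f_1\left(X+X^2+2X^3-2X^4+4X^5\right)\pmod 8 .$$
Next I rewrite the bracket in terms of $\phi(-q)$, $\phi(-q^2)$ and $\phi(-q^4)$. The tools are $X(1+X)=(\phi(-q)^2-1)/4$ from \eqref{DSMN2}, together with $X^2\equiv X(q^2)\pmod 2$ and $X^4\equiv X(q^4)\pmod 2$, which follow from \eqref{DSE5}; each correction is tracked with the appropriate power of $2$ in front. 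The target is an expression
$$\sum_{n\ge0}DSOME(n)q^n\equiv \sum_i c_i\,f_1\prod_j\phi(-q^{e_j})^{k_j}\pmod 8,$$
with integer coefficients $c_i$ and finitely many factors. As a sanity check, this must reproduce \eqref{DSE15} modulo $4$.

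The arithmetic input is the following. Since $50n+10t+1$ is odd, for $t=1,\dots,4$ we have
$$24(50n+10t+1)+1=5\,(240n+48t+5).$$
The second factor is prime to $5$, because $48t\not\equiv0\pmod 5$. So the relevant numbers $N=24m+1$ are exactly divisible by $5$. Every term in the combination above counts, with signs and weights, representations $N=(6k-1)^2+24\,Q(a)$, where $Q$ is a sum of squares, possibly weighted by $2$ or $4$ coming from the $\phi(-q^{2})$ and $\phi(-q^4)$ factors.

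I would make this usable through $5$-dissection. I use the classical dissections
$$f_1=f_{25}\left(R^{-1}-q-q^2R\right),\qquad \phi(-q)=\phi(-q^{25})-2q\,f(-q^{15},-q^{35})+2q^4 f(-q^5,-q^{45}),$$
where $R$ is the usual quotient of $f(-q^{10},-q^{15})$ and $f(-q^5,-q^{20})$. The analogous dissections for $\phi(-q^2)$ and $\phi(-q^4)$ follow by $q\to q^2,q^4$. I first extract the terms $q^{5m+1}$; this is exactly the progression containing $n\equiv1\pmod5$, which corresponds to $5\mid 24n+1$. I then extract the residues of $m$ modulo $5$ that correspond to $t=1,\dots,4$ (the classes $m\equiv 2,4,1,3\pmod 5$), and restrict to odd $n$ by a $U_2$-type argument as in \eqref{DSE16}. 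The claim to establish is that every surviving product carries a factor $8$, either from the explicit $2$'s in the dissection of $\phi(-q)$ or from pairing terms with opposite signs.

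I expect this last verification to be the main obstacle. The bookkeeping of the mixed terms (cross products of the dissected pieces with $R^{\pm1}$) is heavy, and cancellation modulo $8$ may only appear after combining several of the $c_i$-terms.

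If the dissection becomes unmanageable, I would switch to a representation-theoretic argument. For $5\,\|\,N$, representations $N=x^2+24Q(a)$ with $5\mid x$ and with $5\nmid x$ can be paired by an automorph of the form of determinant related to $5$, that is, multiplication by an element of norm $5$ in the relevant order. I would then check that each paired class contributes a multiple of $8$ to the weighted sum.
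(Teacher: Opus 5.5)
Your reduction $\sum DSOME(n)q^n\equiv -f_1(X+X^2+2X^3-2X^4+4X^5)\pmod 8$ and the factorisation $24(50n+10t+1)+1=5(240n+48t+5)$ are correct. The proposal, however, stops exactly where the content of the theorem begins. That every surviving term of the $5$-dissection is divisible by $8$ is only hoped for, and the normal form you intend to dissect cannot exist. Since $\phi(-q^{e})^{\pm1}\equiv1\pmod 2$, any integer combination $\sum_i c_i f_1\prod_j\phi(-q^{e_j})^{k_j}$ is $\equiv(\sum_i c_i)f_1\pmod 2$. Its coefficients at $q^0,q^1,q^2$ therefore all have the same parity, whereas $DSOME(0),DSOME(1),DSOME(2)=0,1,-2$. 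Denominators such as the $\tfrac14$ in $X(1+X)=(\phi(-q)^2-1)/4$ are therefore unavoidable. Once they are allowed, the starting formula $\tfrac{f_1}{8}\bigl(\phi(-q)^{-1}-\phi(-q)^3\bigr)$ is already such a combination, so ``a factor $8$ in every product'' is not something you can check term by term. The paper avoids this by writing the series modulo $8$ as $-Y+2R$. Here $Y=f_1X(1+X)=\tfrac14(f_1^5/f_2^2-f_1)$ is supported on pentagonal exponents, so it vanishes at $N$ because $24N+1$ is not a square. The paper then proves, via the logarithmic derivative of $\phi(-q)$, that $R=f_1(X^4-X^3+2X^5)\equiv f_1X-q\frac{d}{dq}(f_1X)-f_1X(q^2)\pmod 4$.

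You also treat the restriction to odd $N$ as bookkeeping, but it is essential. For example $5\,\|\,24\cdot6+1=145$, yet $DSOME(6)=-4\not\equiv0\pmod 8$, so no argument driven only by $\nu_5(24N+1)=1$ can work. In the paper the parity enters through the derivative term, giving $r(N)\equiv(1-N)a(N)-b(N)\pmod 4$. For odd $N$ one then needs only two facts:
\begin{enumerate}
\item[(a)] $b(N)=0$, which follows from $24N+1=(6j-1)^2+48m^2$ and $2$ being a non-residue modulo $5$;
\item[(b)] the \emph{unweighted} number of representations $24N+1=(6j-1)^2+24m^2$, $m\ge1$, is even.
\end{enumerate}
Only at that point does a pairing suffice, and it is not the one you describe. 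For the form $u^2+24m^2$ that actually arises, $5\,\|\,u^2+24m^2$ forces $5\nmid u$, so there is nothing to pair between $5\mid x$ and $5\nmid x$. Nor is there an element of norm $5$ in $\mathbb{Z}[\sqrt{-6}]$, since $x^2+6y^2=5$ is insoluble. The paper's map is $(u,m)\mapsto\bigl((24sm-u)/5,\,|su+m|/5\bigr)$, where $s=\pm1$ is the unique sign with $5\mid u+sm$; this is multiplication by the non-integral norm-one element $(-1-s\sqrt{-24})/5$. It is an involution without fixed points, because a fixed point would force $u=4sm$, while $u=6j-1$ is odd. Your fallback of showing that each paired class contributes a multiple of $8$ to a \emph{weighted} sum would require controlling the weights modulo $8$ under the pairing. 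You do not address this, and the paper's layered reduction (exact vanishing, then twice a mod-$4$ quantity, then $(1-N)$ times a parity) is designed precisely to bypass it.
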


\begin{proof}
From \eqref{DSMN1}, we obtain
\begin{equation}\label{D8M7}
\sum_{n=0}^{\infty}DSOME(n)q^n\equiv
f_1\bigl(
-X(q)-X(q)^2-2X(q)^3
+2X(q)^4+4X(q)^5
\bigr)
\pmod{8}.
\end{equation}
Let
$$Y(q)=\sum_{n=0}^{\infty}y(n)q^n=f_1X(q)(1+X(q)),$$
and
\begin{equation}\label{D8M9}
R(q)=\sum_{n=0}^{\infty}r(n)q^n=f_1\left(X(q)^4-X(q)^3+2X(q)^5\right).
\end{equation}
Then \eqref{D8M7} becomes
$$\sum_{n=0}^{\infty}DSOME(n)q^n
\equiv -Y(q)+2R(q)\pmod{8}.$$
From \eqref{DSE3} and \eqref{DSMN2}, we obtain
\begin{equation}\label{D8M22}
X(q)(1+X(q))
=\dfrac{1}{4}
\left(
\dfrac{f_1^4}{f_2^2}-1
\right).
\end{equation}
From  \cite[(10.7.3)]{md}, we note that
\begin{equation}\label{D8M12}
\dfrac{f_1^5}{f_2^2}
=
\sum_{j\in\mathbb Z}(1-6j)
q^{j(3j-1)/2}.
\end{equation}
Multiplying \eqref{D8M22} by $f_1$ and employing
\eqref{DSE19} and \eqref{D8M12}, we obtain
\begin{equation*}
Y(q)
=\dfrac{1}{4}
\left(
\dfrac{f_1^5}{f_2^2}-f_1
\right)=
\sum_{j\in\mathbb Z}
\left(\dfrac{1-6j-(-1)^j}{4}\right)
q^{j(3j-1)/2}.
\end{equation*}
Fix $1\leq t\leq4$ and set
\begin{equation}\label{D8M26}
N=50n+10t+1
\quad\mbox{and}\quad
M=24N+1.
\end{equation}
Then
\begin{equation}\label{D8M27}
M
=5(240n+48t+5).
\end{equation}
Since
$240n+48t+5\equiv3t\not\equiv0\pmod{5},
\qquad 1\leq t\leq4,$
\begin{equation}\label{D8M29}
\nu_5(M)=1,
\end{equation}
where $\nu_5(M)$ denotes the $5$-adic valuation of $M$ (that is, $\nu_5(M)$ is the highest exponent of 5 in the prime factorisation of  $M$).

Also, if $N=j(3j-1)/2$ for some $j\in\mathbb Z$, then
\begin{equation*}
M
=(6j-1)^2, 
\end{equation*}
 which implies $\nu_5(M)>1$, which is a contradiction. Therefore, coefficient of $q^N$ in the series expansion of $Y(q)$ is $0$.
 
Again, employing  \eqref{DSMN3},  \eqref{DSE13} and \eqref{DSE10}
 in \eqref{D8M22}, we obtain
\begin{equation}\label{D8M40}
X(q)(1+X(q))
=
\dfrac{1}{4}
\left(
\dfrac{f_1^4}{f_2^2}-1
\right)\equiv
-\sum_{\ell=1}^{\infty}\sigma(\ell)q^\ell
\pmod{4}.
\end{equation}
Employing \eqref{DSMN3} in \eqref{D8M40}, we obtain
\begin{equation}\label{D8A1}
\dfrac{q\dfrac{d}{dq}f_1}{f_1}
\equiv X(q)+X(q)^2
\pmod{4}.
\end{equation}
Logarithmically differentiating \eqref{DSE3}, we obtain
$$\dfrac{2q\dfrac{d}{dq}X(q)}{1+2X(q)}
=
2\dfrac{q\dfrac{d}{dq}f_1}{f_1}
-\dfrac{q\dfrac{d}{dq} f_2}{f_2}, $$
which on employing \eqref{D8A1} gives
\begin{align}
 q\dfrac{d}{dq}   X(q)
&=
\left(1+2X(q)\right)
\left(
\dfrac{q\dfrac{d}{dq}f_1}{f_1}
-
\left.
\dfrac{ q\dfrac{d}{dq}f_1}{f_1}
\right|_{q\mapsto q^2}
\right) \notag\\
&\label{D8A2}\equiv
\left(1+2X(q)\right)
\left(
X(q)+X(q)^2-X(q^2)-X(q^2)^2
\right)
\pmod{4}.
\end{align}
Furthermore,
\begin{equation}\label{DSMN4}
q\dfrac{d}{dq}\left(f_1X(q)\right)
=
f_1\left(
X(q)\dfrac{ q\dfrac{d}{dq}f_1}{f_1}
+  q\dfrac{d}{dq}X(q)
\right).    
\end{equation}
Combinign \eqref{D8A1}, \eqref{D8A2}, and \eqref{DSMN4}, we obtain
\begin{align}
	\dfrac{
		f_1X(q)-q\dfrac{d}{dq}\left(f_1X(q)\right)-f_1X(q^2)
	}{f_1}
	&=
	X(q)-X(q)\dfrac{q\dfrac{d}{dq} f_1}{f_1}
	-q\dfrac{d}{dq}X(q)-X(q^2) \notag\\
	&\label{D8A3}\equiv
	-4X(q)^2-3X(q)^3+X(q^2)^2
	+2X(q)X(q^2)\notag\\
	&\hspace{1cm}+2X(q)X(q^2)^2
	\pmod{4}.
\end{align}
Employing \eqref{DSE5} in \eqref{D8A3}, we obtain
\begin{align}
\dfrac{
f_1X(q)-q\dfrac{d}{dq}\left(f_1X(q)\right)-f_1X(q^2)
}{f_1}
&\equiv
-3X(q)^3+X(q)^4+2X(q)^3+2X(q)^5\notag\\
&\label{nw1}\equiv
X(q)^4-X(q)^3+2X(q)^5
\pmod{4}.
\end{align}
Next, set
\begin{equation}\label{nw2}
f_1X(q)=\sum_{\ell=0}^{\infty}a(\ell)q^\ell
\qquad\mbox{and}\qquad 
f_1X(q^2)=\sum_{\ell=0}^{\infty}b(\ell)q^\ell.
\end{equation}
Using \eqref{nw1} and \eqref{nw2} in \eqref{D8M9} and further comparing the coefficient of $q^N$ from both sides, we obtain
\begin{align*}
r(N)\equiv
(1-N)a(N)
-b(N)
\pmod{4}.
\end{align*}
We claim that $r(N)\equiv 0\pmod 4$, for it is sufficient to show that $b(N)=0$ and $a(N)\equiv 0\pmod 2$ as $(1-N)$ is even. 

Using \eqref{DSE19} and \eqref{xq}, we obtain
\begin{equation}\label{f1xq2}
f_1X(q^2)=\sum_{\ell=0}^{\infty}b(\ell)q^\ell=\sum_{j\in\mathbb Z}\sum_{m=1}^{\infty} 
(-1)^{j+m}
q^{j(3j-1)/2+2m^2}.
\end{equation}
Now, we show 
$
b(N)=0.
$
If for some $j\in\mathbb Z$ and integer $m\geq1$, let 
\begin{equation}\label{DSMN6}
N=\dfrac{j(3j-1)}{2}+2m^2.
\end{equation}
Employing \eqref{DSMN6} in \eqref{D8M26}, we obtain
\begin{equation}\label{D8A7}
M=24N+1=(6j-1)^2+48m^2.
\end{equation}
Employing \eqref{D8M27} in \eqref{D8A7}, we obtain
\begin{equation}\label{DSMN7}
(6j-1)^2-2m^2\equiv0\pmod{5}.
\end{equation}
Suppose that 5 does not divide $m$. Then $\gcd(5, m)=1$ and there exists a positive integer $m^\ast$ such that $(mm^\ast)^2\equiv 1\pmod 5$.  Therefore, \eqref{DSMN7} implies 
$$\left((6j-1)m^\ast\right)^2\equiv2\pmod{5},$$
which is impossible because $2$ is a quadratic nonresidue modulo $5$.
So 5 must divide both $m$ and 
$6j-1$.  Thus, by  \eqref{D8A7} and \eqref{DSMN7}, 25 divides $M$,  which is a contradiction to 
 \eqref{D8M29}. So $N\neq\dfrac{j(3j-1)}{2}+2m^2$ and  by \eqref{f1xq2} $b(N)=0$.\\
It remains to prove  that
\begin{equation*}
a(N)\equiv 0 \pmod{2}.
\end{equation*}
Using \eqref{DSE19} and \eqref{xq}, we obtain
\begin{equation}
\sum_{\ell=0}^{\infty}a(\ell)q^\ell
=f_1X(q)=
\sum_{j\in\mathbb Z}
\sum_{m=1}^{\infty}
(-1)^{j+m}
q^{j(3j-1)/2+m^2}.
\label{ANP2}
\end{equation}
Extracting the coefficient of $q^N$ from both sides of
\eqref{ANP2}, we obtain
\begin{equation*}
a(N)=
\sum_{\substack{j\in\mathbb Z,\;m\geq 1\\
N=j(3j-1)/2+m^2}}
(-1)^{j+m}, 
\end{equation*}
which implies
$$a(N)\equiv
\#\left\{
(j,m)\in\mathbb Z\times\mathbb Z_{\geq1}:
N=\dfrac{j(3j-1)}{2}+m^2
\right\}
\pmod{2}.$$
Therefore, it is enough to show that the number of solutions $(j, m)$ satisfying 
\begin{equation}\label{ANP6}
N=\dfrac{j(3j-1)}{2}+m^2,
\qquad
j\in\mathbb Z,\quad m\geq1,
\end{equation}
is always even.\\\\
For every solution $(j,m)$ of \eqref{ANP6}, let
\begin{equation}\label{ANP7}
u=6j-1,
\end{equation}
then by employing \eqref{ANP6} in \eqref{D8M26}, we obtain
\begin{equation*}
M
=24N+1 =(6j-1)^2+24m^2=u^2+24m^2\equiv u^2-m^2=(u-m)(u+m)\pmod 5.
\end{equation*}
Therefore,  either $5\mid u+m$ or $5\mid u-m$. Also, we note that 5 cannot divide both $u+m$ and $u-m$ simultaneously, otherwise  $\nu_5(M)\ge 2$, a contradiction.
Hence, there is a
unique $s\in\{1,-1\}$ such that 5 divides $u+sm$.
Define, 
\begin{equation}\label{ANP17}
u_1=\dfrac{24sm-u}{5},
\qquad
m_1=\dfrac{|su+m|}{5}.
\end{equation}
Clearly, $u_1\in\mathbb Z$ and $m_1\in\mathbb Z_{>0}$. Also, 
\begin{equation*}
u_1^2+24m_1^2=
\dfrac{(24sm-u)^2+24(su+m)^2}{25} =u^2+24m^2 =M.
\end{equation*}
Moreover, from \eqref{ANP7} and \eqref{ANP17}, 
$u_1\equiv-1\pmod{6}.$
Therefore,
$
j_1=\dfrac{u_1+1}{6}
$ is an integer. So, 
\begin{equation*}
N
=\dfrac{M-1}{24} =\dfrac{j_1(3j_1-1)}{2}+m_1^2.
\end{equation*}
Thus, for any solution $(j, m)$ of \eqref{ANP6} there always exists another  solution $(j_1,m_1)$. In fact, the solution $(j_1, m_1)$ corresponding to the solution $(j, m)$ is always unique. To see this,
define
$$s_1=
\begin{cases}
s,  & \text{if } su+m>0,\\
-s, & \text{if } su+m<0.
\end{cases}$$
Then, in the both cases,
$$s_1m_1=\dfrac{u+sm}{5},
\qquad
u_1+s_1m_1=5sm,
\qquad\mbox{and}\qquad 
|s_1u_1+m_1|=5m, $$
which implies
$$\dfrac{24s_1m_1-u_1}{5}
=\dfrac{24(u+sm)-(24sm-u)}{25}
=u \qquad \text{ and }\qquad \dfrac{|s_1u_1+m_1|}{5}
=m.$$\\\\
Thus, applying same construction as of $(u,m)$ to $(u_1,m_1)$ returns back $(u,m)$. Moreover, if $u_1=u$, then
$$\dfrac{24sm-u}{5}=u
\quad\Longrightarrow\quad
u=4sm,$$
which is impossible since by  \eqref{ANP7}, $u=6j-1$ is odd. Therefore, the solutions  of  \eqref{ANP6} occur in distinct pairs which implies $a(N)$ is even and so $a(N)\equiv 0\pmod 2$. Hence, the proof is complete by noting that $y(N)=0$ and $r(N)\equiv 0\pmod 4$.
\end{proof}

\begin{remark} The conjecture \eqref{SDI11} due to Baruah and Gogoi  is the particular case  $t=2$ of Theorem \ref{nd1}. \end{remark}

 \section*{\bf Declarations}
   
 \noindent\textbf{Funding}: This research did not receive funding.\\
 \noindent{\bf Author Contributions.} Both authors contributed equally to this work.
 
 \noindent{\bf Conflict of Interest.} The authors declare that there is no conflict of interest regarding the publication of this paper.
 
 \noindent{\bf Human and Animal Rights.} The authors declare that there is no research involving human participants or animals in the context of this paper.	
 
 \noindent{\bf Data Availability Statement.} Data sharing is not applicable to this paper as no datasets were generated or analyzed during the current study.	
\bibliographystyle{plain}

\end{document}